\newtheorem{theorem}{Theorem}
\newtheorem*{theorem*}{Theorem}
\newtheorem*{problem*}{Problem}
\newtheorem{lemma}[theorem]{Lemma}
\newtheorem{proposition}[theorem]{Proposition}
\newtheorem{corollary}[theorem]{Corollary}
\theoremstyle{remark}
\newtheorem{remark}[theorem]{Remark}
\newcommand{\norm}[1]{\Vert#1\Vert}
\newcommand{\bignorm}[1]{\bigl\Vert#1\bigr\Vert}
\newcommand{\Bignorm}[1]{\Bigl\Vert#1\Bigr\Vert}
 \newcommand{\Cdb}{\mbox{$\mathbb{C}$}}
 \newcommand{\Tdb}{\mbox{$\mathbb{T}$}}
 \newcommand{\Rdb}{\mbox{$\mathbb{R}$}}
\begin{document}

\title[Peller problem]{Peller's problem concerning
Koplienko-Neidhardt trace formulae : the unitary case}

%\thanks{Research is partially supported by ARC}
%\thanks{Corresponding author: d.potapov@unsw.edu.au}
\author[C. Coine]{Clement Coine}
\email{clement.coine@univ-fcomte.fr}
\author[C. Le Merdy]{Christian Le Merdy}
\email{clemerdy@univ-fcomte.fr}
\author[D. Potapov]{Denis Potapov}
\email{d.potapov@unsw.edu.au}
\author[F. Sukochev]{Fedor Sukochev}
\email{f.sukochev@unsw.edu.au}
\author[A. Tomskova]{Anna Tomskova}
\email{a.tomskova@unsw.edu.au}

\address{D.P, F.S, A.T: School of Mathematics \& Statistics, University of NSW,
Kensington NSW 2052, AUSTRALIA}
\address{C.C, C.L: Laboratoire de Math\'{e}matiques, Universit\'{e} de Franche-comt\'{e},
25030 Besan\c{c}on Cedex, FRANCE}

\maketitle

\begin{abstract} We prove the existence of a $C^2$-function $f\colon\Tdb\to\Cdb\,$
defined on the unit circle, a unitary operator $U$ and a self-adjoint operator
$Z$ in the Hilbert-Schmidt class $\mathcal S^2$, such that
$$
f(e^{iZ}U)-f(U) -\frac{d}{dt}\bigl(f(e^{itZ}U)\bigr)_{\vert t=0}\,\notin\, \mathcal S^1,
$$
the space of trace class operators. This resolves a problem of Peller concerning
the validity of the Koplienko-Neidhardt trace formula for unitaries.
\end{abstract}

\bigskip\noindent
{\it 2000 Mathematics Subject Classification : 47A55, 47B10, 47A56.}

\bibliographystyle{short}

\section{Introduction}

Let $\mathcal H$ be a Hilbert space and let $B(\mathcal H)$ be the algebra of all
bounded operators on $\mathcal H$ equipped with the standard trace ${\rm Tr}$. For any $1\leq p\leq \infty$, let
$\mathcal S^p(\mathcal H)$ denote the Schatten $p$-class over $\mathcal H$. Let 
$\Tdb=\{z\in\Cdb\, :\, \vert z\vert=1\}$ denote the unit circle of the
complex plane. Let $f$ be a function on $\Tdb $, admitting a decomposition 
 $f(z)=\sum _{n=-\infty}^{\infty} c_n z^n$, $z\in \Tdb$ with 
$\sum _{n=-\infty}^{\infty} |nc_n|<\infty $. Let $U\in B(\mathcal H)$ 
be a unitary operator and let $Z\in \mathcal S^1(\mathcal H)$ be a self-adjoint operator. 
In 1962, M. G. Krein proved a result (see \cite[Theorem 2]{Krein}) implying that
there exists a unique function $\eta\in L^1(\mathbb T)$ (not depending on $f$) such that
\begin{equation}\label{KreinsFormula}
{\rm Tr}\bigl(f(e^{iZ}U)-f(U)\bigr)=\int_{\mathbb T} f'(z) \eta(z) dz.
\end{equation}
The function $\eta$ above is called Lifshits-Krein spectral shift function, 
%and was firstly
%introduced in a special case by I. M. Lifshitz \cite{Lif}.
it plays an important role in scattering theory, where it appears in the
formula of the determinant of scattering matrix (for detailed discussion we refer 
to \cite{BY} and references therein, see also self-adjoint version of formula 
\eqref{KreinsFormula} in \cite{Krein1}).

Observe that the right-hand side of $\eqref{KreinsFormula}$ 
makes sense for every Lipschitz function $f$.
In 1964, M. G. Krein \cite{KreinPerturbation1964} discussing a self-adjoint version 
of formula \eqref{KreinsFormula} (introduced in 1953, see \cite[Theorem 4]{Krein1}) 
conjectured that the left-hand side of $\eqref{KreinsFormula}$ also makes sense for 
every Lipschitz function $f$.

The best result to date concerning the description of the class of functions for which 
the left-hand side of $\eqref{KreinsFormula}$  makes sense
is due to V. Peller in \cite{Peller1985}, who established that for 
 $f\in  B^1_{\infty 1}$ (for definition of the Besov
classes see \cite{Peller1985} and references therein).
However, there is an example of a continuously differentiable  function $f$, 
a unitary operator $U$ and a
self-adjoint operator  $Z\in\mathcal S^1(\mathcal H)$ such that 
$$
f(e^{iZ}U)-f(U)\notin \mathcal S^1(\mathcal H).
$$
Such an example can be found in \cite{Peller1985} (see also additional 
discussion and references in \cite{Farforovskaya1972}, 
and \cite{PS-Lipschitz}, \cite{ACS}, \cite{CPSZ1}, \cite{CPSZ2}).

Let now $f\in C^2(\Tdb)$, let $U\in B(\mathcal H)$
be a unitary operator and let $Z\in \mathcal S^2(\mathcal H)$ be a self-adjoint operator. 
Then the difference operator $f(e^{iZ}U)-f(U)$ belongs to $\mathcal S^2(\mathcal H)$
and the function $t\mapsto f(e^{itZ}U) -f(U)$ from $\Rdb$ into $\mathcal S^2(\mathcal H)$
is differentiable, see e.g. \cite[(2.7)]{Peller2005}. Let $\frac{d}{dt}\bigl(f(e^{itZ}U)\bigr)_{\vert t=0}$
denote its derivative at $t=0$. In \cite[Problem 1]{Peller2005}, in connection with the validity 
of the so-called Koplienko-Neidhardt trace formula, V. V. Peller asked whether the 
operator 
\begin{equation}\label{second}
f(e^{iZ}U)-f(U) -\frac{d}{dt}\bigl(f(e^{itZ}U)\bigr)_{\vert t=0}
\end{equation}
necessarily belongs to $\mathcal S^1(\mathcal H)$ under these assumptions. 
He proved that this holds true 
whenever $f$ belongs to the Besov class $B^{2}_{\infty 1}$ and derived a
Koplienko-Neidhardt trace formula in this case. The main purpose of this
paper is to devise a counter-example which shows that Peller's question has a 
negative answer, see Theorem \ref{main} below.

In the preceding paper \cite{CLPST1} we proved the following result: there exists a
$C^2$-function $f\colon\Rdb\to\Rdb\,$ with a bounded second derivative, 
a self-adjoint (unbounded) operator 
$A$ on $\mathcal H$ and a self-adjoint operator $B$ in $\mathcal S^2(\mathcal H)$
such that 
$$
f(A+B) - f(A) - \frac{d}{dt}\bigl(f(A+tB)\bigr)_{\vert t=0}\,\notin\, \mathcal S^1(\mathcal H).
$$
This answered in negative another question raised by V. V. Peller in \cite{Peller2005}.  
It should be noted that the first step of the proof of Theorem \ref{main} follows the 
proof of the main result of \cite{CLPST1}, in the sense 
that we apply a formula obtained in \cite[Theorem 6]{CLPST1} (restated below as Theorem \ref{key}). 
However the key ideas of our approach here are completely different from those in \cite{CLPST1}. Indeed,  
to construct our example we consider bounded (unitary) operators only, and therefore we have 
to work with functions whose derivatives have singular points belonging to 
$\mathbb T$, whereas in \cite{CLPST1} such points were based at infinity. Our analysis 
here is partly based on results from \cite{AdPS}, where some fine estimates for 
operator-functions of such type were obtained.
 
In Section 2 we give some background and preliminary results on 
bilinear Schur products and multiple 
operator integrals in the finite dimensional setting. 
In Section 3 we establish a new formula relating the operator
(\ref{second}) to the actions of appropriate multiple 
operator integrals. Section 4 consists of various finite dimensional estimates 
concerning multiple operator integrals. The main result is established in Section 5.

We end this introduction with a few notation.
Throughout we let $\sigma(A)$ denote the spectrum of an operator $A$ and we let 
$\norm{\ }_p$ denote the norm on the Schatten space $\mathcal S^p(\mathcal H)$.
For any integer $n\geq 1$, we let $\ell^2_n$ be the space $\Cdb^n$ equipped
with its standard Hilbertian structure and we let $M_n$ be the space of all $n\times
n$ matrices with entries in $\Cdb$. Further for any $1\leq p\leq\infty$, we let
$\mathcal S^{p}_n$ denote this matrix space equipped 
with the Schatten $p$-norm $\|\cdot\|_{p}$.

\section{Multiple operator integrals in the finite dimensional case}

For any double-indexed family
$M =\{m_{ij}\}_{i,j=1}^n$, we let $L_M\colon M_n\to M_n$
denote the linear Schur multiplier defined by
$$
L_M(X) = [m_{ij}x_{ij}],\qquad X=[x_{ij}]\in M_n.
$$
Likewise for any triple-indexed family $M =\{m_{ikj}\}_{i,j,k=1}^n$,
we let $B_M\colon M_n\times M_n\to M_n$
denote the bilinear Schur multiplier defined by
$$
B_M(X,Y) = \biggl[\sum_{k=1}^n m_{ikj}x_{ik}y_{kj}\biggr],\qquad X=[x_{ij}], \ Y=[y_{ij}] \in M_n.
$$
The following result from \cite{CLPST1} will provide a key estimate in the 
resolution of Peller's problem.

\begin{theorem}\cite[Theorem 6]{CLPST1}\label{key}
Consider a triple-indexed family
$M=\{m_{ikj}\}_{i,j,k=1}^n$ and for any $k=1,\ldots,n$,
set $M(k)=\{m_{ikj}\}_{i,j=1}^n$. Then we have
$$
\bignorm{B_M \colon \mathcal S^2_n\times \mathcal S^2_n\to \mathcal S^1_n}\,=\,
\sup_{1\le k\le n}\bignorm{L_{M(k)}\colon \mathcal S^\infty_n\to \mathcal S^\infty_n}.
$$
\end{theorem}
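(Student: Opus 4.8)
The plan is to prove the identity by establishing the two inequalities separately, exploiting the fact that the bilinear Schur multiplier $B_M$ decomposes according to the middle index $k$. First I would fix the notation: for $X=[x_{ij}]$ and $Y=[y_{ij}]$ in $M_n$, write $B_M(X,Y)=\sum_{k=1}^n C_k$ where $C_k$ is the rank-at-most-one-type contribution coming from the $k$-th column of $X$ and the $k$-th row of $Y$, weighted by $M(k)$. More precisely, let $e_1,\dots,e_n$ be the canonical basis of $\ell^2_n$, let $P_k=e_k\otimes e_k$ be the rank-one projection onto $\mathbb{C}e_k$, and observe that $(XP_k)$ retains only the $k$-th column of $X$ while $(P_kY)$ retains only the $k$-th row of $Y$. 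Then one checks directly from the definitions that $\sum_k L_{M(k)}(XP_kY)=B_M(X,Y)$, since the $(i,j)$ entry of $L_{M(k)}(XP_kY)$ is exactly $m_{ikj}x_{ik}y_{kj}$.

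For the upper bound $\le$, I would estimate $\norm{B_M(X,Y)}_1\le\sum_k\norm{L_{M(k)}(XP_kY)}_1$. For each $k$, the matrix $XP_kY$ has rank at most one; writing $u_k=Xe_k$ and $v_k=Y^*e_k$ (column vectors) we have $XP_kY=u_k v_k^{\,t}$ in the appropriate sense, so $\norm{XP_kY}_1=\norm{u_k}_2\norm{v_k}_2$. Since $L_{M(k)}$ acts boundedly on $\mathcal S^\infty_n$ with norm $\bignorm{L_{M(k)}\colon\mathcal S^\infty_n\to\mathcal S^\infty_n}$, and since for a rank-one matrix the trace norm and operator norm coincide, we get $\norm{L_{M(k)}(XP_kY)}_1=\norm{L_{M(k)}(XP_kY)}_\infty\le\bignorm{L_{M(k)}\colon\mathcal S^\infty_n\to\mathcal S^\infty_n}\,\norm{u_k}_2\norm{v_k}_2$. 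Summing over $k$ and applying Cauchy--Schwarz yields $\norm{B_M(X,Y)}_1\le\bigl(\sup_k\bignorm{L_{M(k)}}\bigr)\bigl(\sum_k\norm{u_k}_2^2\bigr)^{1/2}\bigl(\sum_k\norm{v_k}_2^2\bigr)^{1/2}=\bigl(\sup_k\bignorm{L_{M(k)}}\bigr)\norm{X}_2\norm{Y}_2$, which is the desired inequality.

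For the lower bound $\ge$, I would fix a $k_0$ achieving (or nearly achieving) the supremum and produce test matrices concentrating all the mass there. Choose $W\in M_n$ with $\norm{W}_\infty\le 1$ and $\norm{L_{M(k_0)}(W)}_\infty$ close to $\bignorm{L_{M(k_0)}\colon\mathcal S^\infty_n\to\mathcal S^\infty_n}$; then further choose unit vectors $\xi,\eta\in\ell^2_n$ with $\langle L_{M(k_0)}(W)\xi,\eta\rangle$ close to $\norm{L_{M(k_0)}(W)}_\infty$. The idea is to pick $X$ and $Y$ so that $XP_kY$ vanishes for $k\neq k_0$ and equals a prescribed matrix of the form $\eta\,\xi^{\,t}$ (times a scalar) for $k=k_0$: take $X$ to have $k_0$-th column equal to $\eta$ and all other columns zero, and $Y$ to have $k_0$-th row equal to $\xi^{\,t}$ and all other rows zero. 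Then $\norm{X}_2=\norm{Y}_2=1$, while $B_M(X,Y)=L_{M(k_0)}(\eta\,\xi^{\,t})$, whose trace norm (being rank one) equals its operator norm; a short computation pairing against suitable vectors shows this is at least roughly $\bignorm{L_{M(k_0)}\colon\mathcal S^\infty_n\to\mathcal S^\infty_n}$ up to the approximation parameters, and letting those tend to zero finishes the proof.

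The main obstacle, and the point requiring the most care, is the lower bound: one must verify that the operator norm of $L_{M(k_0)}$ as a map $\mathcal S^\infty_n\to\mathcal S^\infty_n$ is actually \emph{attained on rank-one inputs}, i.e. that $\bignorm{L_{M(k_0)}\colon\mathcal S^\infty_n\to\mathcal S^\infty_n}=\sup\bigl\{\norm{L_{M(k_0)}(\eta\xi^{\,t})}_\infty:\ \norm\xi_2=\norm\eta_2=1\bigr\}$. This is a standard but not entirely trivial fact about Schur multipliers: it follows from the characterization of the $\mathcal S^\infty\to\mathcal S^\infty$ multiplier norm via factorization $m_{ikj}=\langle a_i,b_j\rangle$ of the symbol $M(k_0)$ through Hilbert space, or alternatively from the fact that the Schur multiplier norm on $B(\ell^2_n)$ is the same whether tested on all contractions or on contractions of the special form $\eta\xi^{\,t}$ after an appropriate duality argument; I would cite this from the references on Schur multipliers (e.g. the standard Pisier/Paulsen theory) or reprove it in a line using that for any $T$ with $\norm T_\infty\le 1$ one has $\norm{L_{M(k_0)}(T)}_\infty=\sup|\langle L_{M(k_0)}(T)\xi,\eta\rangle|=\sup|\langle T, \overline{M(k_0)}\odot(\eta\bar\xi^{\,*})\rangle|$ and then absorbing $T$. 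Once this rank-one reduction is in hand, both directions match and the equality is proved.
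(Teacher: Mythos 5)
Since the paper only quotes this statement from \cite[Theorem 6]{CLPST1} and contains no proof of it, I can only assess your argument on its own merits. Your decomposition $B_M(X,Y)=\sum_k L_{M(k)}(XP_kY)$, the identity $\norm{XP_kY}_1=\norm{Xe_k}_2\norm{Y^*e_k}_2$, the Cauchy--Schwarz step, and the choice of test matrices $X,Y$ concentrated on the $k_0$-th column/row are all correct and natural. However, both halves of your argument invoke the assertion that ``the trace norm and operator norm coincide'' for matrices of the form $L_{M(k)}(R)$ with $R$ of rank one. A Schur multiplier does not preserve rank one: if $M(k)$ is the identity pattern $\{\delta_{ij}\}$ and $R$ is the all-ones rank-one matrix, then $L_{M(k)}(R)$ is the identity matrix, whose trace norm is $n$ and operator norm is $1$. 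In the upper bound this is only a wrong justification of a true inequality: what you need is $\norm{L_N(R)}_1\le\bignorm{L_N\colon\mathcal S^\infty_n\to\mathcal S^\infty_n}\norm{R}_1$, which holds for \emph{all} $R$ because $\bignorm{L_N\colon\mathcal S^1_n\to\mathcal S^1_n}=\bignorm{L_{N^t}\colon\mathcal S^\infty_n\to\mathcal S^\infty_n}=\bignorm{L_N\colon\mathcal S^\infty_n\to\mathcal S^\infty_n}$ (trace duality, plus invariance of the multiplier norm under transposition). With that substitution the upper bound is complete.

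The genuine gap is in the lower bound: the fact you isolate as ``the main obstacle'', namely $\bignorm{L_N\colon\mathcal S^\infty_n\to\mathcal S^\infty_n}=\sup\{\norm{L_N(\eta\xi^{\,t})}_\infty:\norm{\eta}_2=\norm{\xi}_2=1\}$, is not a standard fact about Schur multipliers --- it is false. Writing $\eta\xi^{\,t}=[\eta_i\xi_j]$ one has $L_N(\eta\xi^{\,t})=D_\eta N D_\xi$ with diagonal $D_\eta,D_\xi$, and $|\langle D_\eta N D_\xi u,v\rangle|\le(\max_{i,j}|n_{ij}|)\bigl(\sum_i|\eta_i v_i|\bigr)\bigl(\sum_j|\xi_j u_j|\bigr)\le\max_{i,j}|n_{ij}|$, so the right-hand side of your claimed identity equals $\max_{i,j}|n_{ij}|$; for the triangular truncation pattern this is $1$ while the left-hand side is of order $\log n$. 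The repair is to \emph{not} pass from the trace norm to the operator norm on the output, since $B_M$ takes values in $\mathcal S^1_n$ anyway: with your $X,Y$ you have $B_M(X,Y)=L_{M(k_0)}(\eta\xi^{\,t})$ and $\norm{X}_2=\norm{Y}_2=1$, hence $\bignorm{B_M\colon\mathcal S^2_n\times\mathcal S^2_n\to\mathcal S^1_n}\ge\sup\{\norm{L_{M(k_0)}(\eta\xi^{\,t})}_1:\norm{\eta}_2=\norm{\xi}_2=1\}$; this supremum equals $\bignorm{L_{M(k_0)}\colon\mathcal S^1_n\to\mathcal S^1_n}$ because the norm-one rank-one matrices are exactly the extreme points of the unit ball of $\mathcal S^1_n$, and that norm equals $\bignorm{L_{M(k_0)}\colon\mathcal S^\infty_n\to\mathcal S^\infty_n}$ by the same duality/transposition identity as above. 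The duality computation you sketch in your last lines (pairing $L_{M(k_0)}(T)$ against $\eta,\xi$ and absorbing $T$) is in fact a proof of this $\mathcal S^1$-valued statement, not of the $\mathcal S^\infty$-valued one you stated; once rewritten this way, your argument closes correctly.
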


We now present the finite dimensional versions of double operator integrals (resp. 
triple operator integrals) associated to a pair (resp. a triple) of normal
operators. We follow \cite[Subsections 3.1 and 3.2]{CLPST1}. In the latter paper,
we considered self-adjoint operators only, however the extension to the normal case
is straightforward.

Let $U_0,U_1\in B(\ell^2_n)$ be normal operators. For 
$j=0,1$, consider an orthonormal basis $\{\xi_i^{(j)}\}_{i=1}^n$
of eigenvectors for $U_j$, let 
$\{z_i^{(j)}\}_{i=1}^n$ be the associated
$n$-tuple of eigenvalues, that is, $U_j(\xi_i^{(j)})=z_i^{(j)}\xi_i^{(j)}$, and let
$P_i^{(j)}$ denote the orthogonal projection onto 
the linear span of $\xi_i^{(j)}$. Then $U_0$ and $U_1$ have spectral decompositions
$$
U_0 = \sum_{i=1}^n z_i^{(0)} P_i^{(0)}
\qquad\hbox{and}\qquad
U_1 = \sum_{k=1}^n z_k^{(1)} P_k^{(1)}.
$$
For any function $\phi\colon\Cdb^2\to \Cdb$, we let
$T_\phi^{U_0,U_1}:B(\ell^2_n)\to B(\ell^2_n)$ 
be the linear operator defined by
%\begin{equation}\label{linearMOI}
$$
T_\phi^{U_0,U_1}(X)=\sum_{i,k=1}^{n} \phi(z_i^{(0)},
z_k^{(1)})P_i^{(0)} X P_k^{(1)}, \
\ X\in B(\ell^2_n).
$$
%\end{equation}

Next let $U_2\in B(\ell^2_n)$ be a third normal operators. Again consider
a spectral decomposition 
$$
U_2 = \sum_{j=1}^n z_j^{(2)} P_j^{(2)},
$$ 
that is, $P_1^{(2)},\ldots, P_n^{(2)}$
are pairwise orthogonal rank one projections and
$z_1^{(2)},\ldots, z_n^{(2)}$ are eigenvalues of $U_2$. 

For any $\psi\colon\Cdb^3\to \Cdb$, we let
$T_\psi^{U_0,U_1,U_2}:B(\ell^2_n)\times B(\ell^2_n)\to B(\ell^2_n)$ 
be the bilinear operator defined by
\begin{equation}\label{bilinearMOI}
T_\psi^{U_0,U_1,U_2}(X,Y)=\sum_{i,j,k=1}^{n} \psi(z_i^{(0)},
z_k^{(1)},z_j^{(2)})P_i^{(0)} X P_k^{(1)}YP_j^{(2)}, \
\ X,Y\in B(\ell^2_n).
\end{equation}

The following results relate the norms 
of the above operators to the norms of certain Schur mutlipliers.
The (easy) proofs of these equalities are explained in \cite{CLPST1}.

\begin{lemma}\label{corres} 
Let $U_0,U_1, U_2\in B(\ell^2_n)$ be normal operators.
\begin{itemize} 
\item[(a)]
For any function $\phi\colon\Cdb^2\to\Cdb$, consider the double-indexed 
family $M_\phi = \bigl\{\phi(z_i^{(0)}, z_k^{(1)})\bigr\}_{i,k=1}^n$. Then 
$$
\bignorm{T_\phi^{U_0,U_1}\colon \mathcal S^\infty_n\to \mathcal S^\infty_n}
\,=\,
\bignorm{L_{M_\phi}\colon \mathcal S^\infty_n\to \mathcal S^\infty_n}.
$$
\item[(b)]
For any function $\psi\colon\Cdb^3\to\Cdb$, consider the triple-indexed 
family $M_\psi = \bigl\{\psi(z_i^{(0)}, z_k^{(1)}, z_j^{(2)})\bigr\}_{i,j,k=1}^n$. Then 
$$
\bignorm{T_\psi^{U_0,U_1,U_2}\colon \mathcal S^2_n\times
\mathcal S^2_n\to \mathcal S^1_n}\,=\,
\bignorm{B_{M_{\psi}} 
\colon \mathcal S^2_n\times
\mathcal S^2_n\to \mathcal S^1_n}.
$$
\end{itemize}
\end{lemma}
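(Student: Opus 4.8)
The plan is to prove Lemma \ref{corres} by direct computation, using the fact that rank-one projections onto orthonormal basis vectors behave like matrix units after passing to the eigenbases, together with the basic duality/trace identities for Schatten norms.

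First I would set up coordinates. For part (a), fix the orthonormal bases $\{\xi_i^{(0)}\}$ and $\{\xi_k^{(1)}\}$ diagonalizing $U_0$ and $U_1$, and let $V_0, V_1$ be the unitaries sending the standard basis of $\ell^2_n$ to these bases. Conjugation by $V_0$ on the left and $V_1$ on the right is an isometry on every $\mathcal S^p_n$, so it suffices to compute the norm of $X\mapsto V_0^* T_\phi^{U_0,U_1}(V_0 X V_1^*) V_1$. A direct check shows $P_i^{(0)} = V_0 E_{ii} V_0^*$ and $P_k^{(1)} = V_1 E_{kk} V_1^*$, so this transformed operator sends $X=[x_{ij}]$ to $\sum_{i,k}\phi(z_i^{(0)},z_k^{(1)}) E_{ii} X E_{kk} = [\phi(z_i^{(0)},z_k^{(1)}) x_{ik}]$, which is precisely $L_{M_\phi}(X)$. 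Since conjugation by unitaries is $\mathcal S^\infty_n$-isometric, the two operator norms coincide. For part (b), the same change of variables — conjugate by $V_0$ on the far left, insert $V_1^* V_1$ and $V_2^* V_2$ appropriately, and conjugate by $V_2$ on the far right — turns $T_\psi^{U_0,U_1,U_2}(X,Y)$ into $\sum_{i,k,j}\psi(z_i^{(0)},z_k^{(1)},z_j^{(2)}) E_{ii} X E_{kk} Y E_{jj}$, which unravels to the bilinear Schur multiplier $B_{M_\psi}(X,Y) = \bigl[\sum_k \psi(z_i^{(0)},z_k^{(1)},z_j^{(2)}) x_{ik} y_{kj}\bigr]$. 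One must check that the isometry property is compatible with the bilinear setting: the substitutions $X\rightsquigarrow V_0 X V_1^*$ and $Y\rightsquigarrow V_1 Y V_2^*$ are $\mathcal S^2_n$-isometric in each variable, and the output conjugation $V_0^*(\cdot)V_2$ is $\mathcal S^1_n$-isometric, so the $\mathcal S^2_n\times\mathcal S^2_n\to\mathcal S^1_n$ norms match.

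The only genuine subtlety is bookkeeping with the three unitaries: because the middle operator $U_1$ appears sandwiched between $X$ and $Y$ with the projections $P_k^{(1)}$ acting on both the right of $X$ and the left of $Y$, one has to substitute $V_1$ consistently on both sides so that the $E_{kk}$'s line up and the summation index $k$ contracts correctly. I expect this indexing step to be the main place where care is needed, though it is entirely routine. Once the finite-dimensional operator integrals are identified with Schur and bilinear Schur multipliers in this way, the equalities in the lemma are immediate, and combined with Theorem \ref{key} they yield the bridge between multiple operator integrals and the $\mathcal S^\infty_n\to\mathcal S^\infty_n$ norms of the slices $L_{M_\psi(k)}$ that the later sections will estimate.

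I should also note that no convergence or unboundedness issues arise here since everything is finite-dimensional; the content is purely the algebraic identification plus the elementary observation that left/right multiplication by a fixed unitary is a Schatten isometry. This is exactly why the excerpt describes the proofs as easy and defers to \cite{CLPST1}; in the write-up I would simply record the change-of-variables argument for part (a) in full and remark that part (b) follows by the identical device applied in the bilinear setting.
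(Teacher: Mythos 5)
Your proposal is correct and is exactly the standard change-of-eigenbasis argument: conjugating by the unitaries $V_0,V_1,V_2$ that map the standard basis to the eigenbases turns the spectral projections into matrix units, identifies $T_\phi^{U_0,U_1}$ with $L_{M_\phi}$ and $T_\psi^{U_0,U_1,U_2}$ with $B_{M_\psi}$, and the Schatten-isometry of left/right multiplication by fixed unitaries gives the equality of norms. This is the same (easy) proof the paper defers to in \cite{CLPST1}, so no further comment is needed.
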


The definition (\ref{bilinearMOI}) only depends on the value
of $\psi$ on the product of the spectra of the operators $U_0,U_1,U_2$.
Hence in the definition of $T_\psi^{U_0,U_1, U_2}$,
the function $\psi$ could be defined only on a subset of
$\Cdb^3$ containing the product of these spectra. 
A similar comment applies
to the definition of $T_\phi^{U_0,U_1}$. In the sequel, 
the normal operators $U_j$ will be unitaries and we will
deal with functions $\psi$ (resp. $\phi$) defined on $\Tdb^3$
(resp. on $\Tdb^2$). 

We will need the following approximation lemma.

\begin{lemma}\label{lem_contin}
Let $U_0,U_1,U_2\in B(\ell^2_n)$ be unitary operators 
and let $(F_m)_m$ be a sequence of unitaries such that $F_m\to U_0$ 
 in the uniform operator topology as 
$m\to\infty$. Let $\psi\in C(\Tdb^3)$. Then 
$$
T^{F_m,U_1,U_2}_{\psi}\longrightarrow
T^{U_0,U_1,U_2}_{\psi} \quad\hbox{as}\ m\to\infty.
$$
\end{lemma}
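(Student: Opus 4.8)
The naive approach --- passing to the limit in the spectral decomposition of $F_m$ --- cannot work, since neither the eigenvalues nor the eigenprojections of a normal operator depend continuously on it. The plan is instead to use the linearity of the map $\psi\mapsto T^{V_0,U_1,U_2}_\psi$, for fixed unitaries $V_0,U_1,U_2\in B(\ell^2_n)$, together with two applications of the Stone--Weierstrass theorem, in order to reduce the statement to functions $\psi$ of a very simple form. First I would record the crude uniform bound
$$
\bignorm{T^{V_0,U_1,U_2}_\psi\colon B(\ell^2_n)\times B(\ell^2_n)\to B(\ell^2_n)}\,\le\,n^3\,\sup_{\Tdb^3}\vert\psi\vert ,
$$
valid for \emph{every} triple of unitaries $V_0,U_1,U_2\in B(\ell^2_n)$ and every $\psi\in C(\Tdb^3)$; it follows at once from the triangle inequality applied to the defining sum \eqref{bilinearMOI} and from $\norm{P_i^{(j)}}\le 1$, and --- crucially --- the constant $n^3$ does not depend on the operators.

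The second step handles an elementary tensor, $\psi(z_0,z_1,z_2)=a(z_0)b(z_1)c(z_2)$ with $a,b,c\in C(\Tdb)$. Grouping the sum \eqref{bilinearMOI}, written for the triple $V_0,U_1,U_2$, and using the functional calculus of the operators involved (for instance $\sum_k b(z_k^{(1)})P_k^{(1)}=b(U_1)$, and similarly for $a$ and $c$), one gets
$$
T^{V_0,U_1,U_2}_\psi(X,Y)\,=\,a(V_0)\,X\,b(U_1)\,Y\,c(U_2),\qquad X,Y\in B(\ell^2_n).
$$
Thus the matter comes down to showing $a(F_m)\to a(U_0)$ in operator norm. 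Since every $F_m$ is unitary one has $\norm{a(F_m)}\le\sup_{\Tdb}\vert a\vert$, so a standard $3\varepsilon$-argument reduces this further to the case where $a$ is a Laurent polynomial $\sum_{\vert j\vert\le J}c_jz^j$; and in that case $a(F_m)=\sum_{\vert j\vert\le J}c_jF_m^j\to\sum_{\vert j\vert\le J}c_jU_0^j=a(U_0)$, the negative powers being handled by $F_m^{-1}=F_m^*\to U_0^*=U_0^{-1}$ (the adjoint being an isometry). By linearity this gives $T^{F_m,U_1,U_2}_\psi\to T^{U_0,U_1,U_2}_\psi$ for every finite sum of elementary tensors.

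For an arbitrary $\psi\in C(\Tdb^3)$, I would fix $\varepsilon>0$, choose (by Stone--Weierstrass on $\Tdb^3$) a finite sum $\psi'$ of elementary tensors with $\sup_{\Tdb^3}\vert\psi-\psi'\vert<\varepsilon/(3n^3)$, and estimate
\begin{align*}
\bignorm{T^{F_m,U_1,U_2}_\psi-T^{U_0,U_1,U_2}_\psi}
&\le\bignorm{T^{F_m,U_1,U_2}_{\psi-\psi'}}+\bignorm{T^{U_0,U_1,U_2}_{\psi-\psi'}}\\
&\qquad+\bignorm{T^{F_m,U_1,U_2}_{\psi'}-T^{U_0,U_1,U_2}_{\psi'}} .
\end{align*}
By the crude bound the first two terms are each at most $\varepsilon/3$, while the last one tends to $0$ as $m\to\infty$ by the second step; hence $\limsup_m\bignorm{T^{F_m,U_1,U_2}_\psi-T^{U_0,U_1,U_2}_\psi}\le\varepsilon$, and letting $\varepsilon\to 0$ proves the lemma. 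The only point that genuinely requires care is the obstacle noted at the outset: the whole argument must be routed through the functional calculus and must never refer to the spectral data of $F_m$ themselves; everything else is routine approximation.
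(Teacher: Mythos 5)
Your proof is correct, but it follows a different route from the paper's. The paper never approximates $\psi$: it regroups the defining sum over the spectral decomposition $F=\sum_i z_iP_i$ so that, for each fixed pair $(k,j)$, the coefficient becomes the single operator $\psi(F,z_k^{(1)},z_j^{(2)})$ obtained by applying the continuous functional calculus of $F$ to $\psi(\cdot\,,z_k^{(1)},z_j^{(2)})$, i.e.
$T^{F,U_1,U_2}_{\psi}(X,Y)=\sum_{j,k}\psi(F,z_k^{(1)},z_j^{(2)})\,X\,P_k^{(1)}\,Y\,P_j^{(2)}$; since $U_1,U_2$ (hence their spectral data) are fixed, norm-continuity of $F\mapsto\varphi(F)$ on unitaries immediately gives $T^{F_m,U_1,U_2}_{\psi}(X,Y)\to T^{U_0,U_1,U_2}_{\psi}(X,Y)$ for each $X,Y$, and finite dimensionality upgrades this to convergence of the bilinear maps. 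You instead tensorize in all three variables, which forces you to add two ingredients the paper does not need: the uniform bound $\bignorm{T^{V_0,U_1,U_2}_{\psi}}\le n^3\sup\vert\psi\vert$ (uniform in the unitaries) and Stone--Weierstrass on $\Tdb^3$, plus a second approximation (Laurent polynomials) to prove continuity of the functional calculus, a fact the paper simply quotes. What your version buys is self-containedness and a direct proof of norm convergence of the bilinear maps without the pointwise-plus-finite-dimension step; what the paper's version buys is brevity, since applying the functional calculus only in the first variable removes any need to decompose $\psi$. Both arguments ultimately rest on the same fact, namely norm-continuity of $F\mapsto a(F)$ on the unitary group. (A minor quibble with your opening remark: the spectrum of a normal matrix does vary continuously with the operator; it is only the eigenprojections that can behave badly when eigenvalues collide, which is indeed enough to rule out the naive argument.)
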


\begin{proof} 
Let $F\in B(\ell^2_n)$ be any unitary operator. 
Consider a spectral decomposition $F = \sum_{i=1}^n z_i P_i$.
Let $X,Y\in B(\ell^2_n)$. 
According to (\ref{bilinearMOI}), we have
\begin{align*}
T_\psi^{F,U_1,U_2}(X,Y) & =\sum_{i,j,k=1}^{n} \psi(z_i,
z_k^{(1)},z_j^{(2)})P_i X P_k^{(1)}YP_j^{(2)}\\
& = \sum_{j,k=1}^{n} \Bigl(\sum_{i=1}^n \psi(z_i,
z_k^{(1)},z_j^{(2)})P_i\Bigr) X P_k^{(1)}YP_j^{(2)}\\
& = \sum_{j,k=1}^{n} \psi(F, z_k^{(1)},z_j^{(2)})X P_k^{(1)}YP_j^{(2)},
\end{align*}
where $\psi(F, z_k^{(1)},z_j^{(2)})$ is the operator obtained by applying
the continuous functional calculus of $F$ to 
$\psi(\cdotp,z_k^{(1)},z_j^{(2)})$. 

For any $\varphi\in C(\Tdb)$, the mapping $F\mapsto \varphi(F)$ is continuous from
the set ot unitaries of $B(\ell^2_n)$ into $B(\ell^2_n)$.
%{\color{red} please supply a suitable reference to this fact}
Hence for any $j,k=1,\ldots, n$,
$$
\psi(F_m, z_k^{(1)},z_j^{(2)})\longrightarrow
\psi(U_0, z_k^{(1)},z_j^{(2)})\quad\hbox{as}\ m\to\infty.
$$
From the above computation we deduce that for any $X,Y\in B(\ell^2_n)$,
$$
T^{F_m,U_1,U_2}_{\psi}(X,Y)\longrightarrow
T^{U_0,U_1,U_2}_{\psi}(X,Y) \quad\hbox{as}\ m\to\infty.
$$
Since $T^{F_m,U_1,U_2}_{\psi}$ and 
$T^{U_0,U_1,U_2}_{\psi}$ act on a finite dimensional space, this proves the result.
\end{proof}

\begin{remark}\label{rem_contin} 
Similarly for any unitary operators $U_0,U_1 \in B(\ell^2_n)$, for any 
sequence $(F_m)_m$ of unitaries on $\ell^2_n$ such that $F_m\to U_0$ as 
$m\to\infty$, and for any $\phi\in C(\Tdb^2)$, we have
$$
T^{F_m,U_1}_{\phi}\longrightarrow
T^{U_0,U_1}_{\phi} \quad\hbox{as}\ m\to\infty.
$$
\end{remark}

\section{From Peller's problem to multiple operator integrals}

Let $f\in C^1(\Tdb)$. The divided difference of the first order
is the function $f^{[1]}\colon \Tdb^2\to\Cdb\,$
defined by
\begin{align*}
{f^{[1]} \left(z_0,z_{1} \right)} :=
\begin{cases}\frac
{f(z_0) - f(z_1)}{z_0 - z_1}, & \text{if~$z_0
\neq z_1$} \\
\frac{d}{dz}f(z)_{\vert {z=z_0}} & \text{if~$z_0=z_1$}
\end{cases}, \quad z_0, z_1\in\mathbb{T}.
\end{align*}
This is continuous function, symmetric in the two variables $(z_0,z_1)$.

Assume further that $f\in C^2(\Tdb)$. Then the divided difference of the second order
is the function $f^{[2]}\colon \Tdb^3\to\Cdb\,$
defined by
\begin{align*}
%%\label{[2]}
{f^{[2]} \left(z_0,z_{1},z_2 \right)} :=
\begin{cases}\frac
{f^{[1]}(z_0,z_1) - f^{[1]}(z_1,z_2)}{z_0 - z_2}, & \text{if~$z_0
\neq z_2$}, \\ \frac {d}{dz} f^{[1]}(z,z_1)_{\vert z=z_0}, & \text{if~$z_0=z_2$}
\end{cases}, \quad z_0, z_1,z_2\in\mathbb{T}.
\end{align*}
Note that $f^{[2]}$ is a continuous function, which is
symmetric in the three variables $(z_0,z_1,z_2)$.

Double and triple operator integrals built on divided differences
provide remarkable formulations for the functional calculus of normal
operators. Here we restrict to unitaries. 
First whenever $U_0,U_1\in B(\ell^2_n)$
are unitary operators and $f\in C^1(\Tdb)$, then
\begin{equation}\label{f(A)-f(B)_finite}
f(U_0)-f(U_1)=T^{U_0,U_1}_{f^{[1]}}(U_0-U_1).
\end{equation}
The elementary argument in \cite[Subsection 3.4]{CLPST1} yields this well-known identity.
See \cite[(2.4)]{Peller2005} and the references therein for the validity of that
formula in the infinite dimensional setting.

Second, let $Z\in B(\ell^2_n)$ be a self-adjoint operator
and let $U\in B(\ell^2_n)$ be a unitary operator. Then 
the function $t\mapsto f(e^{itZ}U)$ is differentiable and
\begin{equation}\label{e2}
\frac{d}{dt}\bigl(f(e^{itZ}U)\bigr)_{\vert {t=0}}
=T_{f^{[1]}}^{U,U}(iZU).
\end{equation}
Indeed by (\ref{f(A)-f(B)_finite}), we have
$$
\frac{f(e^{itZ}U) -f(U)}{t}\, = T^{e^{itZ}U,U}_{f^{[1]}}\Bigl(\frac{e^{itZ}U-U}{t}\Bigr)
$$
for any $t\not=0$. Since $\frac{d}{dt}\bigl(e^{itZ}\bigr)_{\vert t=0} = iZ$, the 
result follows from Remark \ref{rem_contin}.

The following identity may be viewed as a higher dimensional version of
\eqref{f(A)-f(B)_finite}. A similar result was established 
in \cite[Theorem 15]{CLPST1} for self-adjoint operators. The proof 
is identical so we omit it.

\begin{proposition}\label{perturbation theorem}
Let $f\in C^2(\mathbb T)$ and let $U_0,U_1,U_2\in B(\ell^2_n)$ be
unitary operators. Then for all $X\in B(\ell^2_n)$ we have
$$T^{U_0,U_2}_{f^{[1]}}(X)-T^{U_1,U_2}_{f^{[1]}}(X)=
T^{U_0,U_1,U_2}_{f^{[2]}}(U_0-U_1,X).
$$
\end{proposition}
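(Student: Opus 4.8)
The plan is to prove the identity by a direct computation in the spectral decompositions, reducing it to an elementary identity for divided differences. Fix spectral decompositions $U_0=\sum_{i}z_i^{(0)}P_i^{(0)}$, $U_1=\sum_{k}z_k^{(1)}P_k^{(1)}$ and $U_2=\sum_{j}z_j^{(2)}P_j^{(2)}$ as in Section~2. By definition \eqref{bilinearMOI},
$$
T^{U_0,U_1,U_2}_{f^{[2]}}(U_0-U_1,X)=\sum_{i,j,k=1}^{n}f^{[2]}\bigl(z_i^{(0)},z_k^{(1)},z_j^{(2)}\bigr)\,P_i^{(0)}(U_0-U_1)P_k^{(1)}\,X\,P_j^{(2)}.
$$
First I would simplify the middle factor: since $P_i^{(0)}U_0=z_i^{(0)}P_i^{(0)}$ and $U_1P_k^{(1)}=z_k^{(1)}P_k^{(1)}$, one has $P_i^{(0)}(U_0-U_1)P_k^{(1)}=\bigl(z_i^{(0)}-z_k^{(1)}\bigr)P_i^{(0)}P_k^{(1)}$.

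Next I would invoke the divided-difference identity
$$
(z_0-z_1)\,f^{[2]}(z_0,z_1,z_2)=f^{[1]}(z_0,z_2)-f^{[1]}(z_1,z_2),\qquad z_0,z_1,z_2\in\Tdb,
$$
which follows from the defining formula for $f^{[2]}$ together with the symmetry of $f^{[1]}$ and $f^{[2]}$: when $z_0=z_1$ both sides vanish, and on the dense set $\{z_0\neq z_1\}$ one substitutes the definition of $f^{[2]}(z_0,z_2,z_1)=f^{[2]}(z_0,z_1,z_2)$. Plugging this into the sum above gives
$$
\sum_{i,j,k=1}^{n}\Bigl(f^{[1]}\bigl(z_i^{(0)},z_j^{(2)}\bigr)-f^{[1]}\bigl(z_k^{(1)},z_j^{(2)}\bigr)\Bigr)P_i^{(0)}P_k^{(1)}\,X\,P_j^{(2)}.
$$
Splitting this into two sums and using the resolutions of identity $\sum_{k}P_k^{(1)}=I$ in the first and $\sum_{i}P_i^{(0)}=I$ in the second to collapse the $k$- and $i$-summations respectively, I am left with
$$
\sum_{i,j}f^{[1]}\bigl(z_i^{(0)},z_j^{(2)}\bigr)P_i^{(0)}XP_j^{(2)}-\sum_{k,j}f^{[1]}\bigl(z_k^{(1)},z_j^{(2)}\bigr)P_k^{(1)}XP_j^{(2)}=T^{U_0,U_2}_{f^{[1]}}(X)-T^{U_1,U_2}_{f^{[1]}}(X),
$$
which is the claimed identity.

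The whole argument is finite-dimensional bookkeeping, so there is no analytic difficulty; the only point requiring a little care is the divided-difference identity, in particular its persistence across the diagonals in $\Tdb^3$ where difference quotients degenerate into derivatives, which is covered by the continuity of $f^{[1]}$ and $f^{[2]}$ noted in Section~3. Since this is precisely the reasoning used for the self-adjoint analogue in \cite[Theorem~15]{CLPST1}, an equally valid route is simply to transcribe that proof, replacing each self-adjoint $A_j$ by the unitary $U_j$ and using \eqref{f(A)-f(B)_finite} and \eqref{bilinearMOI} in place of their self-adjoint counterparts — which is exactly why the authors feel entitled to omit it.
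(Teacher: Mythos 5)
Your proof is correct: the reduction of $P_i^{(0)}(U_0-U_1)P_k^{(1)}$ to $(z_i^{(0)}-z_k^{(1)})P_i^{(0)}P_k^{(1)}$, the identity $(z_0-z_1)f^{[2]}(z_0,z_1,z_2)=f^{[1]}(z_0,z_2)-f^{[1]}(z_1,z_2)$ obtained from the symmetry of $f^{[1]}$ and $f^{[2]}$, and the collapse of the sums via $\sum_k P_k^{(1)}=I$ and $\sum_i P_i^{(0)}=I$ are exactly the computation the paper has in mind when it omits the proof and refers to the self-adjoint case in \cite[Theorem 15]{CLPST1}. This is essentially the same approach as the paper's (only the harmless phrase about a ``dense set'' is superfluous, since the identity holds pointwise by the two-case analysis you already give).
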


We conclude this short section with a formula relating the second order perturbation
operator (\ref{second}) with a combination of operator integrals.

\begin{theorem}\label{th_KopliankoToTOI} 
For any self-adjoint operator $Z\in B(\ell^2_n)$, for any
unitary operator $U\in B(\ell^2_n)$ and for
any $f\in C^2(\mathbb T),$ we have
\begin{multline}\label{Taylor_MOI}
f(e^{iZ}U)-f(U)-\frac{d}{dt}\bigl(f(e^{itZ}U)\bigr)_{\vert {t=0}}\\=
T_{f^{[2]}}^{e^{iZ}U,U,U}(e^{iZ}U-U,iZU)+T_{f^{[1]}}^{e^{iZ}U,U}(e^{iZ}U-U-iZU).
\end{multline}
\end{theorem}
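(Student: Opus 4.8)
The plan is to prove the identity by combining the first-order perturbation formula \eqref{f(A)-f(B)_finite}, the derivative formula \eqref{e2}, and the higher-order perturbation identity of Proposition \ref{perturbation theorem}. First I would apply \eqref{f(A)-f(B)_finite} with $U_0=e^{iZ}U$ and $U_1=U$ to rewrite $f(e^{iZ}U)-f(U)=T_{f^{[1]}}^{e^{iZ}U,U}(e^{iZ}U-U)$, and I would rewrite the derivative term using \eqref{e2} as $\frac{d}{dt}\bigl(f(e^{itZ}U)\bigr)_{\vert t=0}=T_{f^{[1]}}^{U,U}(iZU)$. Thus the left-hand side of \eqref{Taylor_MOI} equals $T_{f^{[1]}}^{e^{iZ}U,U}(e^{iZ}U-U)-T_{f^{[1]}}^{U,U}(iZU)$.

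Next I would split this difference into two pieces by inserting and subtracting $T_{f^{[1]}}^{U,U}(e^{iZ}U-U)$, or more directly, by using the linearity of $T_{f^{[1]}}^{U,U}$ in its argument: write $T_{f^{[1]}}^{U,U}(iZU) = T_{f^{[1]}}^{U,U}(e^{iZ}U-U) - T_{f^{[1]}}^{U,U}(e^{iZ}U-U-iZU)$. Substituting this in, the left-hand side becomes
\[
\bigl(T_{f^{[1]}}^{e^{iZ}U,U}(e^{iZ}U-U)-T_{f^{[1]}}^{U,U}(e^{iZ}U-U)\bigr) + T_{f^{[1]}}^{U,U}(e^{iZ}U-U-iZU).
\]
For the first bracket I would apply Proposition \ref{perturbation theorem} with $U_0=e^{iZ}U$, $U_1=U$, $U_2=U$ and $X=e^{iZ}U-U$, which gives exactly $T_{f^{[2]}}^{e^{iZ}U,U,U}(e^{iZ}U-U,e^{iZ}U-U)$. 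This is not yet quite the claimed first term on the right of \eqref{Taylor_MOI}, so a small adjustment is needed: I would again use linearity in the second argument of the triple operator integral, replacing $e^{iZ}U-U$ by $iZU$ plus $(e^{iZ}U-U-iZU)$, which produces $T_{f^{[2]}}^{e^{iZ}U,U,U}(e^{iZ}U-U,iZU)$ together with a correction term $T_{f^{[2]}}^{e^{iZ}U,U,U}(e^{iZ}U-U, e^{iZ}U-U-iZU)$.

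At this point the remaining task is to show that the leftover terms combine into $T_{f^{[1]}}^{e^{iZ}U,U}(e^{iZ}U-U-iZU)$; that is, that $T_{f^{[1]}}^{U,U}(R) + T_{f^{[2]}}^{e^{iZ}U,U,U}(e^{iZ}U-U, R) = T_{f^{[1]}}^{e^{iZ}U,U}(R)$ where $R := e^{iZ}U-U-iZU$. But this is precisely Proposition \ref{perturbation theorem} read in the other direction, with $U_0=e^{iZ}U$, $U_1=U$, $U_2=U$, $X=R$: it states $T_{f^{[1]}}^{e^{iZ}U,U}(R) - T_{f^{[1]}}^{U,U}(R) = T_{f^{[2]}}^{e^{iZ}U,U,U}(e^{iZ}U-U,R)$. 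So the bookkeeping closes up. The only genuine subtlety, and the step I would be most careful about, is making sure each invocation of Proposition \ref{perturbation theorem} uses the correct ordering of the operators $U_0,U_1,U_2$ in the superscripts — the proposition is stated for a specific slot structure $T^{U_0,U_2}$ minus $T^{U_1,U_2}$ equals $T^{U_0,U_1,U_2}$, and one must verify that with the repeated operator $U_2=U$ all instances match up. Everything else is linear algebra: bilinearity of the triple operator integral and linearity of the double operator integral in their operator arguments. There is no analytic difficulty here since we are in finite dimensions and $f\in C^2(\Tdb)$ makes $f^{[1]}$ and $f^{[2]}$ continuous, so all the operator integrals are well-defined.
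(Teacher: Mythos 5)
Your argument is correct, and it uses exactly the ingredients the paper uses (\eqref{f(A)-f(B)_finite}, \eqref{e2}, Proposition \ref{perturbation theorem}, and linearity), but the bookkeeping is organized differently. The paper, after reducing the left-hand side to $T_{f^{[1]}}^{e^{iZ}U,U}(e^{iZ}U-U)-T_{f^{[1]}}^{U,U}(iZU)$, splits the \emph{first} term as $T_{f^{[1]}}^{e^{iZ}U,U}(e^{iZ}U-U-iZU)+T_{f^{[1]}}^{e^{iZ}U,U}(iZU)$; then a single application of Proposition \ref{perturbation theorem} with $X=iZU$ converts $T_{f^{[1]}}^{e^{iZ}U,U}(iZU)-T_{f^{[1]}}^{U,U}(iZU)$ into the triple operator integral term, and the identity \eqref{Taylor_MOI} drops out immediately. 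You instead split the \emph{second} term $T_{f^{[1]}}^{U,U}(iZU)$, which forces two invocations of Proposition \ref{perturbation theorem} (with $X=e^{iZ}U-U$ and then with $X=e^{iZ}U-U-iZU$) plus a bilinearity split of $T_{f^{[2]}}^{e^{iZ}U,U,U}$ in its second argument; the correction terms do cancel as you claim, so the proof closes, it is just longer than necessary. Your worry about slot ordering is easily dispatched: in every use you have $U_0=e^{iZ}U$, $U_1=U_2=U$, which matches the proposition's structure $T^{U_0,U_2}_{f^{[1]}}-T^{U_1,U_2}_{f^{[1]}}=T^{U_0,U_1,U_2}_{f^{[2]}}(U_0-U_1,\cdot\,)$ verbatim. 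In short: correct, same toolkit, but the paper's choice of which double operator integral to decompose yields a one-step conclusion where yours needs a round trip.
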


\begin{proof}
By \eqref{f(A)-f(B)_finite} we have
$$
f(e^{iZ}U)-f(U)=T_{f^{[1]}}^{e^{iZ}U,U}(e^{iZ}U-U).
$$
Combining with \eqref{e2}, we obtain
$$
f(e^{iZ}U)-f(U)-\frac{d}{dt}\bigl(f(e^{itZ}U)\bigr)_{\vert {t=0}} \,=\,
T_{f^{[1]}}^{e^{iZ}U,U}(e^{iZ}U-U)\, -\, T_{f^{[1]}}^{U,U}(iZU).
$$
By linearity, the right-hand side can be written as
$$
T_{f^{[1]}}^{e^{iZ}U,U}(e^{iZ}U-U -iZU)\, +\,
\bigl(T_{f^{[1]}}^{e^{iZ}U,U}(iZU)-\, T_{f^{[1]}}^{U,U}(iZU)\bigr).
$$
Applying Proposition \ref{perturbation theorem}, we obtain that
$$
T_{f^{[1]}}^{e^{iZ}U,U}(iZU)-\, T_{f^{[1]}}^{U,U}(iZU) = 
T_{f^{[2]}}^{e^{iZ}U,U,U}(e^{iZ}U -U, iZU),
$$
and this yields the desired identity (\ref{Taylor_MOI}).
\end{proof}

\section{Finite-dimensional constructions} \label{sec_FD}

In this section we establish various estimates concerning
finite dimensional operators. The symbol `${\rm const}$' will
stand for uniform positive constants, not depending on the dimension.

The estimates we are going to establish in this section start
from a result going back to \cite{AdPS}. Let $h\colon [-e^{-1}, e^{-1}]\to\Rdb\,$
be the function defined by 
%\begin{equation}\label{def_f_0}
$$
h(x):=
\left\{\begin{array}{rc}
|x|\Big(\log\Big|\log\frac{|x|}{e}\Big|\Big)^{-\frac12}, & x\neq 0 \\
0, & x=0
\end{array}\right..
$$
%\end{equation}
Then $h$ is a $C^1$-function. We may extend it to a 
$2\pi$-periodic $C^1$-function, that we still denote by 
$h$ for convenience.

According to \cite[Section 3]{AdPS}, there exists a constant $c>0$ and, for
any $n\geq 3$, self-adjoint operators $R_n,D_n\in B(\ell^{2}_{2n})$
such that 
\begin{equation}\label{est1}
\|R_nD_n-D_nR_n\|_\infty\le \pi 
\end{equation}
and 
\begin{equation}\label{est2}
\bignorm{R_n h(D_n)- h(D_n)R_n}_\infty \geq  c\,\log(n)^{\frac12}.
\end{equation}
%and $\sigma(D_n) =\bigl\{\pm \frac1{e}, \pm \frac1{e^2}, \ldots ,\pm \frac1{e^n}\bigr\}$.
%In particular this implies that
%\begin{equation}\label{est3}
%\norm{D_n}=\,\tfrac1e.
%\end{equation}
By changing the dimension
from $2n$ to $2n+1$ and adding a zero on the diagonal, one may obtain the
above results for some self-adjoint
operators $R_n,D_n\in B(\ell^{2}_{2n+1})$
satisfying the additional property
\begin{equation}\label{est4}
0\in\sigma(D_n).
\end{equation}

We shall derive the following result.

\begin{theorem}\label{A and B exist}
For any  $n\ge 3$, there exist self-adjoint operators 
$A_{n},B_{n}\in B(\ell^{2}_{2n+1})$ such that $B_n\not=0$,
$0\in\sigma(A_n)$, 
%\begin{equation}\label{normest}
%\norm{B_n}_\infty\leq 1,\quad 
%\norm{A_n} = \,\frac1e, 
%\end{equation}
%\begin{equation}\label{A and B exist_3}
$$
\bignorm{h(A_n+B_n)-h(A_n)}_\infty\ge 
{\rm const} \,\log(n)^{\frac12} \|B_n\|_\infty,
$$
%\end{equation}
and the operators
$A_{n}$ and $A_n + B_n$ are conjugate. That is, there exists a 
unitary operator $S_n\in B(\ell^{2}_{2n+1})$ such 
that $A_n+B_n = S_n^{-1}A_n S_n$.
\end{theorem}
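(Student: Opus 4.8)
The plan is to massage the operators $R_n, D_n$ from \eqref{est1}, \eqref{est2}, \eqref{est4} into the required form by a rescaling argument combined with the standard trick of conjugating a perturbation into a unitary rotation. First I would handle the commutator estimates: set $D_n$ to be the given self-adjoint operator with $0\in\sigma(D_n)$ acting on $\ell^2_{2n+1}$, and observe that after scaling $R_n$ by a small parameter $\varepsilon>0$ we have $\|(\varepsilon R_n)D_n - D_n(\varepsilon R_n)\|_\infty\le \pi\varepsilon$, which can be made as small as we like. The point of making the commutator small is that it lets us pass from the additive commutator $[\varepsilon R_n, h(D_n)]$ to a genuine \emph{difference} $h(A_n+B_n)-h(A_n)$ of the form $h(e^{i\varepsilon R_n}D_n e^{-i\varepsilon R_n}) - h(D_n)$: indeed $e^{i\varepsilon R_n}D_n e^{-i\varepsilon R_n} = D_n + i\varepsilon[R_n,D_n] + O(\varepsilon^2)$, so by choosing $\varepsilon = \varepsilon_n$ appropriately (e.g. $\varepsilon_n \sim 1/\log n$ or smaller) one can arrange that the linear-in-$\varepsilon$ term dominates while $h$ stays in the domain $[-e^{-1},e^{-1}]$ where the formula for $h$ is in force.

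The construction of $A_n$ and $B_n$ then proceeds as follows. Put $A_n = D_n$ (possibly rescaled by a fixed factor so that its spectrum, and that of the conjugate $A_n+B_n$, lies inside $[-e^{-1}, e^{-1}]$; this preserves $0\in\sigma(A_n)$), let $S_n = e^{i\varepsilon_n R_n}$, which is unitary since $R_n$ is self-adjoint, and set $A_n + B_n := S_n^{-1} A_n S_n$, i.e. $B_n := S_n^{-1}A_nS_n - A_n$. Then $A_n$ and $A_n+B_n$ are conjugate by construction, $B_n$ is self-adjoint, and $B_n\ne 0$ for $\varepsilon_n$ small and nonzero because $[R_n,D_n]\ne 0$ (this is forced by \eqref{est2}). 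The key inequality $\|h(A_n+B_n)-h(A_n)\|_\infty \ge {\rm const}\,\log(n)^{1/2}\|B_n\|_\infty$ is obtained by comparing both sides to the scaled commutator: on the one hand $h(S_n^{-1}D_nS_n) - h(D_n) = S_n^{-1}h(D_n)S_n - h(D_n)$ (since $h(S_n^{-1}D_nS_n) = S_n^{-1}h(D_n)S_n$ by the functional calculus), and $\|S_n^{-1}h(D_n)S_n - h(D_n)\|_\infty = \|h(D_n)S_n - S_nh(D_n)\|_\infty$, which one expands as $\|i\varepsilon_n[h(D_n),R_n] + O(\varepsilon_n^2)\|_\infty \ge \varepsilon_n(\|[h(D_n),R_n]\|_\infty - C\varepsilon_n) \ge \varepsilon_n(c\log(n)^{1/2} - C\varepsilon_n)$ using \eqref{est2}; on the other hand $\|B_n\|_\infty = \|S_n^{-1}D_nS_n - D_n\|_\infty = \|D_nS_n - S_nD_n\|_\infty \le \varepsilon_n\|[D_n,R_n]\|_\infty + C'\varepsilon_n^2 \le \pi\varepsilon_n + C'\varepsilon_n^2$ using \eqref{est1}. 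Dividing the first estimate by the second and choosing $\varepsilon_n$ small enough (say $\varepsilon_n \le c\log(n)^{1/2}/(2C)$ and $\varepsilon_n\le 1$), the right-hand side of the ratio is bounded below by a uniform positive constant times $\log(n)^{1/2}$, which is exactly the claimed estimate.

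The step I expect to be the main obstacle is controlling the remainder terms in the two expansions $e^{i\varepsilon_n R_n}D_ne^{-i\varepsilon_n R_n} = D_n + i\varepsilon_n[R_n,D_n] + (\text{remainder})$ and $S_n^{-1}h(D_n)S_n - h(D_n) = i\varepsilon_n[h(D_n),R_n] + (\text{remainder})$ \emph{uniformly in $n$}, because the operators $R_n$ themselves are not norm-bounded independently of $n$ — only the commutators in \eqref{est1} and \eqref{est2} are controlled. So the naive Taylor bound $O(\varepsilon_n^2\|R_n\|_\infty^2)$ is useless. The fix is to expand $e^{i\varepsilon_n R_n}De^{-i\varepsilon_n R_n}$ as a telescoping/Duhamel series whose terms are iterated commutators $\varepsilon_n^k\,\mathrm{ad}_{R_n}^k(D_n)/k!$: the first iterated commutator is bounded by $\pi$ via \eqref{est1}, and higher iterated commutators $\mathrm{ad}_{R_n}^k(D_n) = \mathrm{ad}_{R_n}^{k-1}([R_n,D_n])$ can then be estimated by $2^{k-1}\|R_n\|_\infty^{k-1}\pi$, so that the tail $\sum_{k\ge 2}$ is $O(\varepsilon_n^2 \pi e^{2\varepsilon_n\|R_n\|_\infty})$; choosing $\varepsilon_n\|R_n\|_\infty$ bounded (which is compatible with $\varepsilon_n\sim \log(n)^{-1}$ if $\|R_n\|_\infty$ grows slower, or else one simply takes $\varepsilon_n = \delta/\|R_n\|_\infty$ for a small fixed $\delta$ and checks $\varepsilon_n\log(n)^{1/2}\to\infty$ still holds — which follows from \eqref{est2} since $\|[R_n,h(D_n)]\|_\infty \le 2\|R_n\|_\infty\|h(D_n)\|_\infty \le 2e^{-1}\|R_n\|_\infty$ forces $\|R_n\|_\infty \ge \frac{c}{2e^{-1}}\log(n)^{1/2}$, hence $\varepsilon_n = \delta/\|R_n\|_\infty$ gives $\|B_n\|_\infty\lesssim \varepsilon_n$ and $\|h(A_n+B_n)-h(A_n)\|_\infty \gtrsim \varepsilon_n \log(n)^{1/2} \gtrsim \log(n)^{1/2}\|B_n\|_\infty$ directly). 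A parallel Duhamel expansion handles the $h(D_n)$ conjugation. Once this uniform remainder control is in place, the rest of the proof is the bookkeeping described above.
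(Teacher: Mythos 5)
Your construction is essentially the paper's: take $A_n=D_n$, conjugate by the unitary $S_n=e^{i\varepsilon R_n}$, set $B_n=S_n^{-1}A_nS_n-A_n$, and use $h(S_n^{-1}D_nS_n)=S_n^{-1}h(D_n)S_n$ together with \eqref{est1} and \eqref{est2}. The only real difference is how the estimate is extracted. The paper does it softly: for each fixed $n$ it lets $t\to0$ and uses that the difference quotients $\frac1t\|e^{itR_n}D_ne^{-itR_n}-D_n\|_\infty$ and $\frac1t\|e^{itR_n}h(D_n)e^{-itR_n}-h(D_n)\|_\infty$ converge to $\|[R_n,D_n]\|_\infty\le\pi$ and $\|[R_n,h(D_n)]\|_\infty\ge c\log(n)^{1/2}$ respectively; hence for some $t=t(n)>0$ one has $\|B_{n,t}\|_\infty\le 2\pi t$ and $\|h(D_n+B_{n,t})-h(D_n)\|_\infty\ge\frac{c}{2}\log(n)^{1/2}\,t$, and the factor $t$ cancels in the ratio. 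So the obstacle you single out --- uniform-in-$n$ control of the Taylor remainder despite $\|R_n\|_\infty$ being unbounded --- simply does not arise in the paper's route, because the scale parameter is allowed to depend on $n$ arbitrarily and only the two limiting constants matter. Your quantitative fix (iterated-commutator/Duhamel expansion, first-order terms controlled by \eqref{est1} and \eqref{est2}, tails of relative size $O(\delta)$ after choosing $\varepsilon_n=\delta/\|R_n\|_\infty$ with $\delta$ a small absolute constant) is correct and reaches the same conclusion; it costs more work but has the mild benefit of exhibiting an explicit admissible $\varepsilon_n$.

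Two small corrections. Do not rescale $D_n$ to force its spectrum into $[-e^{-1},e^{-1}]$: the lower bound \eqref{est2} is stated for $h(D_n)$ with $D_n$ as given, and $h$ is not homogeneous, so rescaling would void it; it is also unnecessary, since $h$ is extended to a $2\pi$-periodic $C^1$ function, so $h(D_n)$ makes sense for any self-adjoint $D_n$ and all your argument needs is $\|h\|_\infty\le C$ for a universal constant (the value $2e^{-1}$ you quote is not quite right, but irrelevant). Also, the aside that ``$\varepsilon_n\log(n)^{1/2}\to\infty$'' is neither needed nor true in general (with $\varepsilon_n=\delta/\|R_n\|_\infty$ and $\|R_n\|_\infty\gtrsim\log(n)^{1/2}$ this product is merely bounded); what the theorem asks for, and what your chain $\|B_n\|_\infty\lesssim\varepsilon_n$ and $\|h(A_n+B_n)-h(A_n)\|_\infty\gtrsim\varepsilon_n\log(n)^{1/2}$ does deliver, is only the ratio estimate. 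Finally, $B_n\neq0$ is obtained most cleanly from the strictly positive lower bound on $\|h(A_n+B_n)-h(A_n)\|_\infty$ (if $B_n=0$ this norm would vanish); your argument via $[R_n,D_n]\neq0$ also works for $\varepsilon_n\|R_n\|_\infty<\pi$, but then deserves a one-line justification.
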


\begin{proof} 
Let us first observe that for any $N\geq 1$ and any operators $X,Y\in B(\ell^2_N)$,
\begin{equation}\label{derivation}
\frac{e^{itX}Y - Y e^{itX}}{t}\,\longrightarrow\, i(XY-YX)\quad\hbox{as}\ t\to 0.
\end{equation}
Indeed, this follows from the fact that $\frac{d}{dt}(e^{itX})_{\vert {t=0}} = iX$.

Consider $D_n$ and $R_n$ satisfying (\ref{est1}),
(\ref{est2})
%(\ref{est3}) 
and (\ref{est4}).
For any $t>0$, define
$$
B_{n,t}:=e^{itR_n}D_n e^{-itR_n}- D_n.
$$

On the one hand, applying (\ref{derivation}) with $X=R_n$ and $Y=D_n$,
we obtain that
\begin{align*}
\frac1t \|B_{n,t}\|_\infty &=
\frac1t \bignorm{e^{itR_n}D_ne^{-itR_n}- D_n}_\infty \\ &
=\frac1t \bignorm{e^{itR_n}D_n- D_n e^{itR_n}}_\infty\\ &
\longrightarrow \|R_nD_n-D_nR_n\|_\infty
\end{align*}
as $t\to 0$.

On the other hand, using the identity 
$$
h(e^{itR_n}D_ne^{-itR_n}) = 
e^{itR_n}h(D_n)e^{-itR_n}
$$ 
and applying \eqref{derivation} 
with $X=R_n$ and $Y=h(D_n)$, we have
\begin{align*}
\frac1t\bignorm{h(D_n+B_{n,t})- h(D_n)}_\infty 
& =
\frac1t\bignorm{e^{itR_n}h(D_n)e^{-itR_n}-h(D_n)}_\infty
\\ & =
\frac1t\bignorm{e^{itR_n}h(D_n)-h(D_n)e^{itR_n}}_\infty
\\ & \longrightarrow
\|R_n h(D_n)-h(D_n)R_n\|_\infty
\end{align*}
as $t\to 0$.

Therefore, there exists $t>0$ such that
\begin{equation}\label{minor}
\frac{t}{2}\,\|R_nD_n-D_nR_n\|_\infty\leq \|B_{n,t}\|_\infty \le 2\pi t  
\end{equation}
and
$$
\bignorm{h(D_n+B_{n,t})-h(D_n)}_\infty\ge c\,\frac{\log(n)^{\frac12}}{2} \, t.
$$
The above two estimates lead to
$$
\bignorm{h(D_n+B_{n,t})-h(D_n)}_\infty\ge \frac{c}{4\pi}\, \log(n)^{\frac12}\,\norm{B_{n,t}}_\infty.
$$
%Moreover we may choose $t\leq \frac{1}{2\pi}$, so that we also have $\norm{B_{n,t}}_\infty\leq 1$.
Furthermore property (\ref{est2}) implies that $D_n$ and $R_n$ do not commute. 
Hence the first inequality in (\ref{minor}) ensures that $B_{n,t}\not =0$. 

To get the result, we set $A_n=D_n$ and $B_n=B_{n,t}$. According to the definition of 
$B_{n,t}$, the operators $A_n$ and $A_n+B_n$ are conjugate. All other properties of
the statement of the theorem follow from the above estimates and 
%from (\ref{est3}) and 
(\ref{est4}).
\end{proof}

Let $g\in C^1(\mathbb T)$ be the unique function satisfying
\begin{equation}\label{def_g_0}
g(e^{i\theta})=h(\theta), \qquad \theta\in \Rdb.
\end{equation}
The following theorem translates the preceding result into the setting of unitary operators.

\begin{theorem}\label{U and V exist}
For any $n\ge 3$, 
there exist unitary operators $H_{n},K_{n}\in B(\ell^{2}_{2n+1})$  such that
%\begin{equation}\label{norm of U_n}
$$
H_n\not= K_n,\quad
\sigma(H_n)=\sigma(K_n),\quad 1\in\sigma(H_n),
$$
%\end{equation}
and
\begin{equation}\label{U and V exist_3}
\|g(K_n)-g(H_n)\|_\infty\ge {\rm const}\, \log(n)^\frac12 \|K_n-H_n\|_\infty.
\end{equation}
\end{theorem}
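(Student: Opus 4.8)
The plan is to obtain $H_n$ and $K_n$ simply by exponentiating the operators supplied by Theorem~\ref{A and B exist}. Fix $n\ge 3$, let $A_n,B_n\in B(\ell^2_{2n+1})$ be the self-adjoint operators and $S_n$ the unitary furnished by that theorem, and set
$$
H_n:=e^{iA_n},\qquad K_n:=e^{i(A_n+B_n)}.
$$
These are unitary operators on $\ell^2_{2n+1}$, and I would then verify the four required properties one by one, reading them off from the corresponding properties of $A_n,B_n,S_n$.

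The first key step is the functional-calculus identity $g(e^{iA})=h(A)$, valid for every self-adjoint $A\in B(\ell^2_N)$: writing $A=\sum_j\lambda_j P_j$ we have $e^{iA}=\sum_j e^{i\lambda_j}P_j$, and because $h$ is $2\pi$-periodic the definition \eqref{def_g_0} forces $g(e^{i\lambda_j})=h(\lambda_j)$ for each $j$, consistently even when several of the $e^{i\lambda_j}$ coincide, so $g(e^{iA})=\sum_j h(\lambda_j)P_j=h(A)$. Applying this to $A_n$ and $A_n+B_n$ gives $g(H_n)=h(A_n)$ and $g(K_n)=h(A_n+B_n)$, whence Theorem~\ref{A and B exist} at once yields
$$
\|g(K_n)-g(H_n)\|_\infty=\|h(A_n+B_n)-h(A_n)\|_\infty\ge{\rm const}\,\log(n)^{\frac12}\,\|B_n\|_\infty .
$$
As $B_n\neq 0$, the right-hand side is strictly positive, so $g(K_n)\neq g(H_n)$ and therefore $H_n\neq K_n$. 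Next, $1=e^{i0}\in\sigma(e^{iA_n})=\sigma(H_n)$ because $0\in\sigma(A_n)$; and from $A_n+B_n=S_n^{-1}A_nS_n$ we get $K_n=e^{iS_n^{-1}A_nS_n}=S_n^{-1}e^{iA_n}S_n=S_n^{-1}H_nS_n$, so $\sigma(K_n)=\sigma(H_n)$.

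The last step is the upper estimate $\|K_n-H_n\|_\infty\le\|B_n\|_\infty$. Here I would differentiate $s\mapsto e^{is(A_n+B_n)}e^{i(1-s)A_n}$ on $[0,1]$ to obtain
$$
e^{i(A_n+B_n)}-e^{iA_n}=\int_0^1 e^{is(A_n+B_n)}\,(iB_n)\,e^{i(1-s)A_n}\,ds,
$$
and since the exponentials are unitary, $\|K_n-H_n\|_\infty\le\|B_n\|_\infty$. Combining with the lower bound gives $\|g(K_n)-g(H_n)\|_\infty\ge{\rm const}\,\log(n)^{\frac12}\,\|K_n-H_n\|_\infty$, which is \eqref{U and V exist_3}. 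I do not anticipate a real obstacle: the statement is essentially a translation of Theorem~\ref{A and B exist} along $X\mapsto e^{iX}$, and the only points needing care are the well-definedness of $g$ together with the identity $g(e^{iA})=h(A)$ (both resting on the $2\pi$-periodicity of $h$) and the elementary bound $\|e^{i(A+B)}-e^{iA}\|_\infty\le\|B\|_\infty$.
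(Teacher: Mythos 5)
Your proposal is correct and follows essentially the same route as the paper: set $H_n=e^{iA_n}$, $K_n=e^{i(A_n+B_n)}$, use $g(e^{iA})=h(A)$ and the conjugacy to transfer the properties of Theorem~\ref{A and B exist}. The only (harmless) differences are that you prove $\|K_n-H_n\|_\infty\le\|B_n\|_\infty$ directly via the Duhamel integral where the paper cites \cite[Lemma 8]{PS-Lipschitz}, and you deduce $H_n\neq K_n$ from the strict positivity of the lower bound rather than from the spectral projections of $A_n$ and $A_n+B_n$ — both of which are clean and valid.
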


\begin{proof} Given any $n\ge 3$, let $A_n,B_n$ be the operators 
from Theorem \ref{A and B exist}, and set
$$
H_{n}=e^{i A_n}\qquad\hbox{and}\qquad K_{n}=e^{i(A_n+B_n)}.
$$
These are unitary operators. Since $A_n$ and $A_n+ B_n$ are conjugate, they have the 
same spectrum hence in turn, $\sigma(H_{n})=\sigma(K_{n})$. Moreover $1\in\sigma(H_{n})$
since $0\in\sigma(A_n)$. Since $A_n$ and $A_n+B_n$ are conjugate but different,
their sets of spectral projections are different.
This implies that $H_n\not= K_n$.

By construction we have
$$
g(H_{n})=h(A_n)\qquad\hbox{and}\qquad
g(K_{n})=h(A_n+B_n).
$$
Therefore, by Theorem \ref{A and B exist}, we have
$$
\norm{g(K_{n}) - g(H_{n})}_\infty\geq {\rm const} \log(n)^{\frac12}\norm{B_n}_\infty.
$$
Moreover 
$$
\|K_n-H_n\|_\infty =\bignorm{e^{i(A_n+B_n)} -e^{iA_n}}_\infty\leq\norm{B_n}_\infty
$$
by \cite[Lemma 8]{PS-Lipschitz}. This yields the result.
\end{proof}

Let $f\colon\Tdb\to \Cdb\,$ be defined by
\begin{equation}\label{def_g_1}
f(z) =(z-1)g(z),\qquad z\in\Tdb.
\end{equation}
It turns out that $f\in C^2(\mathbb T)$. This 
follows from the definition
of $h$, which is $C^2$ on $(-e^{-1},e^{-1})\setminus\{0\}$, 
and the fact that $\lim_{x\to 0} xh''(x)=0$.
%Indeed it can be deduced from the facts 
%hat $h_0$ is an even function on $(-e^{-1}, e^{-1})$ and that 
%for $x\in(0,e^{-1})$, we have $h_0(x) = x \gamma(x)^{-\frac12}$
%for some $C^2$-function $\gamma$ satisfying 
%$$
%\lim_{x\to 0+}\gamma(x) = \infty,\quad
%\lim_{x\to 0+} x\gamma'(x) = 0
%\quad\hbox{and}\quad 
%\lim_{x\to 0+} x^2\gamma''(x) = 0.
%$$
Details are left to the reader. 

We also define an auxiliary function
$\varsigma:\mathbb T^3\to \mathbb C$ given by
\begin{equation}\label{def_varsigma}
\varsigma(z_0,z_1,z_2)=z_1 f^{[2]}(z_0,z_1,z_2).
\end{equation}

\begin{lemma}\label{one is zero}
For any $z_0,z_2\in\Tdb$, we have 
$$
\varsigma(z_0,1,z_2)=g^{[1]}(z_0,z_2).
$$
\end{lemma}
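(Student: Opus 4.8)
The plan is to unwind all the definitions and reduce the claimed identity $\varsigma(z_0,1,z_2)=g^{[1]}(z_0,z_2)$ to an identity about divided differences of $f(z)=(z-1)g(z)$. By the definition \eqref{def_varsigma}, $\varsigma(z_0,1,z_2)=1\cdot f^{[2]}(z_0,1,z_2)=f^{[2]}(z_0,1,z_2)$, so it suffices to show $f^{[2]}(z_0,1,z_2)=g^{[1]}(z_0,z_2)$ for all $z_0,z_2\in\Tdb$. Since $f$ is $C^2$ and all the divided differences in question are continuous, I only need to treat the generic case $z_0\neq z_2$, $z_0\neq 1$, $z_2\neq 1$ and then invoke continuity to cover the coincidence cases.

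First I would compute $f^{[1]}(w,1)$ for $w\neq 1$: by definition
\[
f^{[1]}(w,1)=\frac{f(w)-f(1)}{w-1}=\frac{(w-1)g(w)-0}{w-1}=g(w),
\]
using $f(1)=(1-1)g(1)=0$. Next, for $z_0\neq z_2$,
\[
f^{[2]}(z_0,1,z_2)=\frac{f^{[1]}(z_0,1)-f^{[1]}(1,z_2)}{z_0-z_2}
=\frac{g(z_0)-g(z_2)}{z_0-z_2}=g^{[1]}(z_0,z_2),
\]
where I used that $f^{[1]}$ is symmetric so $f^{[1]}(1,z_2)=f^{[1]}(z_2,1)=g(z_2)$, and then recognized the last quotient as the definition of $g^{[1]}(z_0,z_2)$ when $z_0\neq z_2$. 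This establishes the identity on the dense open subset of $\Tdb^2$ where $z_0\neq z_2$.

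To finish, I would observe that both sides of the claimed equality are continuous functions of $(z_0,z_2)\in\Tdb^2$: $\varsigma(z_0,1,z_2)=f^{[2]}(z_0,1,z_2)$ is continuous since $f\in C^2(\Tdb)$ makes $f^{[2]}$ continuous on $\Tdb^3$, and $g^{[1]}$ is continuous since $g\in C^1(\Tdb)$. Two continuous functions on $\Tdb^2$ that agree on the dense set $\{z_0\neq z_2\}$ must agree everywhere, so in particular $\varsigma(z_0,1,z_2)=g^{[1]}(z_0,z_2)$ for all $z_0,z_2\in\Tdb$, including the diagonal $z_0=z_2$ (where both sides equal $g'(z_0)$). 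There is no real obstacle here; the only mild point of care is tracking which arguments of $f^{[1]}$ are allowed to coincide with $1$ and using the symmetry of $f^{[1]}$ correctly, together with the vanishing $f(1)=0$ that is built into the definition \eqref{def_g_1} of $f$.
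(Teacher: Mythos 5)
Your proof is correct. The core computation is the same as the paper's: you verify $f^{[2]}(z_0,1,z_2)=g^{[1]}(z_0,z_2)$ in the generic case $z_0\neq z_2$, $z_0\neq 1$, $z_2\neq 1$ using $f(1)=0$ and the symmetry of $f^{[1]}$, exactly as in the paper's first case. Where you diverge is in the degenerate configurations: the paper treats them by explicit computation (the cases $z_0=1$ or $z_2=1$ via $\frac{d}{dz}f(z)_{\vert z=1}=g(1)=h(0)=0$, and the case $z_0=z_2$ via $f^{[2]}(z_0,1,z_0)=\frac{d}{dz}f^{[1]}(z,1)_{\vert z=z_0}$), whereas you dispose of all of them at once by noting that both sides are continuous on $\Tdb^2$ and agree on a dense open set. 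Your density argument is legitimate -- the continuity of $f^{[2]}$ and of $g^{[1]}$ is stated in the paper and is standard for $C^2$ (resp.\ $C^1$) functions -- and it is arguably cleaner, trading three explicit limit computations for one soft argument; the paper's version has the minor advantage of being self-contained and of exhibiting the value on the diagonal directly. One small imprecision: near the end you say the identity is established "on the dense open subset of $\Tdb^2$ where $z_0\neq z_2$", but your computation actually requires all three conditions $z_0\neq z_2$, $z_0\neq 1$, $z_2\neq 1$ (the quotient form of $f^{[1]}(w,1)$ needs $w\neq 1$); since the triply-restricted set is still dense and open, the argument is unaffected, but you should state the correct set.
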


\begin{proof}
By the definition of $\varsigma,$ and since $z_1=1$, it is enough to prove that
$$
f^{[2]}(z_0,1,z_2)=g^{[1]}(z_0,z_2), \qquad z_0,z_2\in\Tdb.
$$

We have to consider several different cases.
Let us first assume that $z_0\neq z_2.$ If $z_0\neq 1$ and $z_2\neq 1,$ then we have
\begin{align*}
f^{[2]}(z_0,1,z_2)& =\frac{f^{[1]}(z_0,1)-f^{[1]}(1,z_2)}{z_0-z_2}
=\frac{\frac{f(z_0)-f(1)}{z_0-1}-\frac{f(1)-f(z_2)}{1-z_2}}{z_0-z_2}\\
& =\frac{g(z_0)-g(z_2)}{z_0-z_2}=g^{[1]}(z_0,z_2).
\end{align*}
If $z_0=1$ and $z_2\neq 1,$ then using $\frac{d}{dz}f(z)_{\vert z=1} =  g(1) =h(0)
=0$, we have
\begin{align*}
f^{[2]}(1,1,z_2)& =\frac{f^{[1]}(1,1)-f^{[1]}(1,z_2)}{1-z_2}=
\frac{\frac{d}{dz}f(z)_{\vert z=1}-\frac{f(1)-f(z_2)}{1-z_2}}{1-z_2}\\ &
=\frac{-g(z_2)}{1-z_2}=g^{[1]}(1,z_2).
\end{align*}
The argument is similar, when $z_0\neq 1$ and $z_2= 1.$

Assume now that $z_0=z_2.$
In this case, we obtain that
\begin{align*}
f^{[2]}(z_0,1,z_0) & =\frac{d}{dz}f^{[1]}(z,1)_{\vert z=z_0}=
\frac{d}{dz}\Big(\frac{f(z)-f(1)}{z-1}\Big)_{\vert z=z_0}\\ &= 
\frac{d}{dz}g(z)_{\vert z=z_0}=g^{[1]}(z_0,z_0).
\end{align*}
\end{proof}

\begin{corollary}\label{MOI_estimate}
For any $n\ge 3$, there exist unitary operators 
$H_{n},K_{n}\in B(\ell^{2}_{2n+1})$  such that 
\begin{equation*}
\sigma(H_n)=\sigma(K_n),
\end{equation*} 
and
\begin{equation}\label{221}
\big\|T_{\varsigma}^{K_n,H_{n},H_{n}}\colon\mathcal S^2_{2n+1}\times
\mathcal S^2_{2n+1}\to \mathcal S^1_{2n+1} \big\|\ge {\rm const}\,\log(n)^{\frac12}.
\end{equation}
\end{corollary}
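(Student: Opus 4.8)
The plan is to take the unitary operators $H_n,K_n\in B(\ell^2_{2n+1})$ furnished by Theorem \ref{U and V exist} and to convert the lower estimate \eqref{U and V exist_3} on $g(K_n)-g(H_n)$ into the desired lower estimate on the triple operator integral $T_\varsigma^{K_n,H_n,H_n}$. The conversion runs through the correspondence Lemma \ref{corres}, the factorization Theorem \ref{key}, and --- this is the decisive point --- the identity of Lemma \ref{one is zero}, which says that freezing the middle variable of $\varsigma$ at the value $1$ turns it into the first divided difference $g^{[1]}$. Since Theorem \ref{U and V exist} guarantees that $1\in\sigma(H_n)$, one of the ``slices'' appearing in Theorem \ref{key} will be precisely the divided-difference family of $g$.

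Concretely, combining Lemma \ref{corres}(b) with Theorem \ref{key} gives
$$
\bignorm{T_\varsigma^{K_n,H_n,H_n}\colon\mathcal S^2_{2n+1}\times\mathcal S^2_{2n+1}\to\mathcal S^1_{2n+1}}
=\sup_{1\le k\le 2n+1}\bignorm{L_{M_\varsigma(k)}\colon\mathcal S^\infty_{2n+1}\to\mathcal S^\infty_{2n+1}},
$$
where $M_\varsigma(k)=\{\varsigma(z_i^{(0)},z_k^{(1)},z_j^{(2)})\}_{i,j}$, the $z_i^{(0)}$ being the eigenvalues of $K_n$ and the $z_k^{(1)},z_j^{(2)}$ those of $H_n$ (the two copies of $H_n$ being diagonalised in a common basis, which is legitimate since they coincide). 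Choosing $k_0$ with $z_{k_0}^{(1)}=1$ and applying Lemma \ref{one is zero}, we get $M_\varsigma(k_0)=\{g^{[1]}(z_i^{(0)},z_j^{(2)})\}_{i,j}$, which by Lemma \ref{corres}(a) (with $\phi=g^{[1]}$) satisfies $\bignorm{L_{M_\varsigma(k_0)}\colon\mathcal S^\infty_{2n+1}\to\mathcal S^\infty_{2n+1}}=\bignorm{T_{g^{[1]}}^{K_n,H_n}\colon\mathcal S^\infty_{2n+1}\to\mathcal S^\infty_{2n+1}}$. Retaining only the term $k=k_0$ in the supremum therefore yields
$$
\bignorm{T_\varsigma^{K_n,H_n,H_n}\colon\mathcal S^2_{2n+1}\times\mathcal S^2_{2n+1}\to\mathcal S^1_{2n+1}}
\ge\bignorm{T_{g^{[1]}}^{K_n,H_n}\colon\mathcal S^\infty_{2n+1}\to\mathcal S^\infty_{2n+1}}.
$$

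It remains to bound the right-hand side below, and here \eqref{f(A)-f(B)_finite} applied to $g\in C^1(\mathbb T)$ gives $g(K_n)-g(H_n)=T_{g^{[1]}}^{K_n,H_n}(K_n-H_n)$, so that
$$
\bignorm{T_{g^{[1]}}^{K_n,H_n}\colon\mathcal S^\infty_{2n+1}\to\mathcal S^\infty_{2n+1}}
\ge\frac{\norm{g(K_n)-g(H_n)}_\infty}{\norm{K_n-H_n}_\infty}\ge{\rm const}\,\log(n)^{\frac12},
$$
the last step being \eqref{U and V exist_3}; the denominator is nonzero because $H_n\ne K_n$ by Theorem \ref{U and V exist}. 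Chaining the displays proves \eqref{221}. The only genuine idea in the argument is the selection of the slice $k_0$ at which the middle spectral value is $1$; once that is in place, everything reduces to bookkeeping with Lemmas \ref{corres} and \ref{one is zero}, Theorem \ref{key}, and the identity \eqref{f(A)-f(B)_finite}. The one point that deserves a moment's attention is that the reduced family $M_\varsigma(k_0)$ is exactly of the shape to which Lemma \ref{corres}(a) applies --- which is clear, since $U_1=U_2=H_n$ allows a common eigenbasis.
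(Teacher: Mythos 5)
Your proposal is correct and follows essentially the same route as the paper's own proof: reduce via Lemma \ref{corres} and Theorem \ref{key} to the slice of $\varsigma$ at the middle eigenvalue $1\in\sigma(H_n)$, identify that slice with $g^{[1]}$ by Lemma \ref{one is zero}, and bound $\bignorm{T_{g^{[1]}}^{K_n,H_n}}$ from below by testing on $K_n-H_n$ and invoking \eqref{f(A)-f(B)_finite} together with \eqref{U and V exist_3}. No gaps.
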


\begin{proof}
Take $H_n, K_n$ as in Theorem \ref{U and V exist}; 
these unitary operators have the same spectrum.
Let $\{\mu_k\}_{k=1}^{2n+1}$ be the sequence of eigenvalues of the operator $H_n$,
counted with multiplicity. Since $1\in\sigma(H_n)$, we may assume that 
$\mu_1=1$. According to Lemma \ref{corres} and Theorem \ref{key}, we have
\begin{equation*}
\big\|T_{\varsigma}^{K_n,H_n,H_n}\colon\mathcal S^2_{2n+1}\times
\mathcal S^2_{2n+1}\to \mathcal S^1_{2n+1} \big\|\,=\,\max_{1\le k\le {2n+1}}
\big\|T_{\varsigma_k}^{K_n,H_n}\colon\mathcal S^\infty_{2n+1}\to \mathcal S^\infty_{2n+1}\big\|,
\end{equation*}
where, for any $k=1,\ldots, 2n+1$, we set
$$
\varsigma_k(z_0,z_1):=\varsigma(z_0,\mu_k, z_1), \ \ z_0,z_1\in\mathbb T.
$$
In particular, the inequality
\begin{equation*}
\big\|T_{\varsigma}^{K_n,H_n,H_n}\colon\mathcal S^2_{2n+1}\times
\mathcal S^2_{2n+1}\to \mathcal S^1_{2n+1} \big\|\ge
\big\|T_{\varsigma_1}^{K_n,H_n}\colon\mathcal S^\infty_{2n+1}\to \mathcal S^\infty_{2n+1}\big\|
\end{equation*}
holds. From Lemma \ref{one is zero}, we have that
$$
\varsigma_1(z_0,z_1)=\varsigma(z_0, 1, z_1)=g^{[1]}(z_0,z_1).
$$
Therefore, we obtain 
\begin{equation}\label{connection between DOI and MOI_inequality}
\big\|T_{\varsigma}^{K_n,H_n,H_n}\colon\mathcal S^2_{2n+1}\times
\mathcal S^2_{2n+1}\to \mathcal S^1_{2n+1} \big\|\ge  
\big\|T_{g^{[1]}}^{K_n,H_n}\colon\mathcal S^\infty_{2n+1}\to \mathcal S^\infty_{2n+1}\big\|.
\end{equation}
Since $H_n\not= K_n$, we derive
$$
\big\|T_{\varsigma}^{K_n,H_n,H_n}\colon\mathcal S^2_{2n+1}\times
\mathcal S^2_{2n+1}\to \mathcal S^1_{2n+1} \big\|\ge  
\,\frac{\bignorm{T_{g^{[1]}}^{K_n,H_n}(K_n-H_n)}_\infty}{\norm{K_n-H_n}_\infty}\,.
$$
From the identity \eqref{f(A)-f(B)_finite}, we have 
$T_{g^{[1]}}^{K_n,H_n}(K_n-H_n)= g(K_n) -g(H_n)$. Hence the 
above inequality means that
$$
\big\|T_{\varsigma}^{K_n,H_n,H_n}\colon\mathcal S^2_{2n+1}\times
\mathcal S^2_{2n+1}\to \mathcal S^1_{2n+1} \big\|\ge  
\,\frac{\norm{g(K_n) -g(H_n)}_\infty}{\norm{K_n-H_n}_\infty}\,.
$$
Applying (\ref{U and V exist_3}) we obtain the desired estimate.
\end{proof}

We are now ready to prove the final estimate of this section.

\begin{corollary}\label{thm_mainTOIestimate}
%Let $g\in C^2(\mathbb T)$ be the function defined by \eqref{def_g_1}. 
For any $n\ge 3$, there exist a self-adjoint operator $W_n\in B(\ell^{2}_{8n+4})$
with $\|W_n\|_2\le 1$ and a unitary operator $U_n\in B(\ell^{2}_{8n+4})$ such that
\begin{equation}\label{mainTOIestimate}
\Big\|T_{f^{[2]}}^{U_n,U_n,U_n}(W_n U_n,W_n U_n)\Big\|_1\ge 
{\rm const} \,\log(n)^{\frac12}.
\end{equation}
\end{corollary}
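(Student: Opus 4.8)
The plan is to upgrade Corollary~\ref{MOI_estimate}, which gives a lower bound for the \emph{bilinear} operator integral $T_\varsigma^{K_n,H_n,H_n}$ built on the auxiliary symbol $\varsigma(z_0,z_1,z_2)=z_1 f^{[2]}(z_0,z_1,z_2)$, into the desired lower bound for $T_{f^{[2]}}^{U_n,U_n,U_n}$ evaluated at the specific arguments $(W_nU_n,W_nU_n)$. There are two mismatches to remove: (i) the operators $K_n$ and $H_n$ are different (though with the same spectrum), whereas the target uses a single unitary $U_n$ three times; and (ii) $\varsigma$ carries an extra factor $z_1$, and moreover the bilinear integral in Corollary~\ref{MOI_estimate} is a norm of the \emph{operator} $T_\varsigma^{K_n,H_n,H_n}$, not an evaluation at a prescribed pair. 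The standard device for (i) is a $2\times 2$ (block) dilation trick: on $\ell^2_{2n+1}\oplus\ell^2_{2n+1}$ (dimension $4n+2$, then $8n+4$ after one more doubling) put
\[
U_n=\begin{pmatrix} H_n & 0\\ 0 & K_n\end{pmatrix},
\]
so that $\sigma(U_n)=\sigma(H_n)=\sigma(K_n)$ and all three "slots" of the triple operator integral can be made to see $H_n$, $K_n$, $H_n$ respectively by sandwiching with the appropriate corner projections. Concretely, if $P,Q$ are the two diagonal corner projections, then for $X,Y\in B(\ell^2_{2n+1})$ one has an identity of the form $P\,T_{f^{[2]}}^{U_n,U_n,U_n}(QXP,\;PYP)\,P = T_{f^{[2]}}^{K_n,H_n,H_n}(X,Y)$ (after identifying the relevant corner with $B(\ell^2_{2n+1})$), because the spectral projections of $U_n$ split along the blocks. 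Since compression by projections does not increase the $\mathcal S^1$-norm, this reduces matters to producing the right arguments $X,Y$.

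For obstacle (ii), the extra factor $z_1$ is absorbed by a well-chosen $Y$: taking $Y$ to involve $U_n$ on the right (this is exactly why the target is stated with $W_nU_n$ rather than $W_n$) converts $T_{f^{[2]}}^{\cdots}(\,\cdot\,,\,\cdot\,U_n)$ into $T_{\varsigma}^{\cdots}(\,\cdot\,,\,\cdot\,)$, since $z_1 f^{[2]}(z_0,z_1,z_2)=\varsigma(z_0,z_1,z_2)$ and the middle eigenvalue $z_1$ is precisely the scalar produced by the projection $P_k^{(1)}$ acting on $U_1$; this is the same bookkeeping used in the proof of Theorem~\ref{th_KopliankoToTOI} and Lemma~\ref{one is zero}. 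It remains to handle the passage from the \emph{operator norm} of $T_\varsigma^{K_n,H_n,H_n}\colon\mathcal S^2\times\mathcal S^2\to\mathcal S^1$ to a single evaluation: by definition of the operator norm there exist $X_0,Y_0$ with $\|X_0\|_2=\|Y_0\|_2=1$ and $\|T_\varsigma^{K_n,H_n,H_n}(X_0,Y_0)\|_1\ge {\rm const}\,\log(n)^{1/2}$, and these $X_0,Y_0$ are what we feed into the block construction. Here a second doubling of the dimension (from $4n+2$ to $8n+4$) gives enough room to realise both $X_0$ (in the "$QXP$" corner) and $Y_0$ (in the "$PYP$" corner) simultaneously as off-diagonal, resp. diagonal, pieces of a single operator $W_nU_n$ with $W_n$ self-adjoint; one sets $W_n$ to be the self-adjoint operator whose relevant matrix blocks are built from $X_0,Y_0$ (and $U_n^{-1}$) and checks $\|W_n\|_2\le 1$ by the normalisation $\|X_0\|_2,\|Y_0\|_2\le 1$ together with the elementary bound $\|W_nU_n\|_2=\|W_n\|_2$ for $U_n$ unitary.

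Putting these together: choose $H_n,K_n$ and the near-extremal pair $X_0,Y_0$ from Corollary~\ref{MOI_estimate}; form $U_n$ as the block-diagonal unitary and $W_n$ as the block self-adjoint operator encoding $X_0,Y_0$; verify the three symbol slots of $T_{f^{[2]}}^{U_n,U_n,U_n}$ restricted to the appropriate corners reproduce $T_\varsigma^{K_n,H_n,H_n}(X_0,Y_0)$ after the $z_1\leftrightarrow U_n$ substitution; and conclude via the contraction property of corner compressions in $\mathcal S^1$ that $\|T_{f^{[2]}}^{U_n,U_n,U_n}(W_nU_n,W_nU_n)\|_1$ dominates $\|T_\varsigma^{K_n,H_n,H_n}(X_0,Y_0)\|_1\ge{\rm const}\,\log(n)^{1/2}$, while $\|W_n\|_2\le 1$. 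The main obstacle I expect is the combinatorial/algebraic verification in step three — getting the block placements of $X_0$ and $Y_0$ right so that the spectral projections $P_i^{(0)},P_k^{(1)},P_j^{(2)}$ of $U_n$ select exactly the cross terms computing $T_\varsigma^{K_n,H_n,H_n}(X_0,Y_0)$ and nothing else, and simultaneously confirming that the extra $U_n$ on the right of $W_nU_n$ produces the factor $\mu_k$ in the right slot. Everything else (same-spectrum, $1\in\sigma$, norm normalisations, contractivity of compressions) is routine.
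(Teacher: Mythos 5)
Your proposal is correct and follows essentially the same route as the paper: the block-diagonal dilation of $H_n,K_n$ (doubled twice to reach size $8n+4$), a self-adjoint $W_n$ encoding a near-extremal pair for $T_{\varsigma}^{K_n,H_n,H_n}$ (this is exactly the step the paper imports from \cite[Lemmas 22--25]{CLPST1}), and the absorption of the factor $z_1$ by multiplying the arguments by $U_n$. The only bookkeeping differences are that the paper proves the exact identity $T_{f^{[2]}}^{U_n,U_n,U_n}(W_nU_n,W_nU_n)=T_{\varsigma}^{U_n,U_n,U_n}(W_n,W_n)\,U_n$, in which the middle eigenvalue $z_k$ comes from the \emph{first} argument's $U_n$ while the second argument's $U_n$ merely factors out as a harmless right unitary, and the relevant compressed corner in your dilation is the off-diagonal one $Q(\cdot)P$ rather than $P(\cdot)P$ --- neither point affects the argument.
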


\begin{proof}
We take $H_n$ and $K_n$ given by Corollary \ref{MOI_estimate}. 
Then we consider 
\begin{equation}\label{def_H_n} V_n:=\left(\begin{array}{cc}
K_n & 0 \\ 0 & H_n
\end{array}\right)\qquad \hbox{and then}\qquad
U_n:=\left(\begin{array}{cc}
V_n & 0 \\ 0 &  V_n
\end{array}\right).
\end{equation}
Then $V_n$ is a unitary operator acting on $\ell^{2}_{4n+2}$
and $U_n$ is a unitary operator acting on $\ell^{2}_{8n+4}$.

We claim that there exists a self-adjoint operator $W_n\in B(\ell^{2}_{8n+4})$
such that $\|W_n\|_2\le 1$ and
$$
\bignorm{T_{\varsigma}^{U_n,U_n,U_n}(W_n,W_n)}_1\geq{\rm const}\,\log(n)^{\frac12}.
$$
Indeed, using (\ref{221}) and the fact that $H_n$ and $K_n$ have the same sprectrum,
this follows from the proofs of \cite[Lemmas 22-25]{CLPST1}. Indeed the arguments
there can be used word for word in the present case. It therefore suffices to show
\begin{equation}\label{goal}
\bignorm{T_{\varsigma}^{U_n,U_n,U_n}(W_n,W_n)}_1
= \bignorm{T_{f^{[2]}}^{U_n,U_n,U_n}(W_n U_n, W_n U_n)}_1.
\end{equation}

For that purpose we set $N=8n+4$ and consider a spectral decomposition 
$U_n = \sum_{i=1}^N z_i  P_i\,$ of $U_n$.
Then by definition (\ref{bilinearMOI}) we have 
\begin{align*}
T_{f^{[2]}}^{U_n,U_n,U_n}(W_n U_n,W_n U_n) & =
\sum_{i,j,k=1}^{N} f^{[2]}(z_i,
z_k,z_j)P_i (W_nU_n) P_k
(W_nU_n) P_j \\
& =\sum_{i,j,k=1}^{N} f^{[2]}(z_i,
z_k,z_j) P_i W_n\Bigl(\sum_{l=1}^{N}z_l P_l\Bigr)P_k
W_n P_j U_n\\
& =\sum_{i,j,k=1}^{N} z_k f^{[2]}(z_i, z_k,z_j) P_i W_n P_k
W_n P_j  U_n\\ 
& \stackrel{\eqref{def_varsigma}}{=}\sum_{i,j,k=1}^{N} \varsigma(z_i,
z_k,z_j) P_i W_n P_k
W_n P_j U_n\\
& =T_{\varsigma}^{U_n,U_n,U_n}(W_n,W_n) U_n.
\end{align*}
Since $U_n$ is a unitary, this equality implies (\ref{goal}), which
completes the proof.
\end{proof}

\section{A solution to Peller's problem for unitary operators}

In this section, we answer Peller's question
raised in \cite[Problem 1]{Peller2005} in the negative.

\begin{theorem}\label{main} 
There exist a function $f\in C^2(\Tdb)$, a separable Hilbert
space $\mathcal H$, a unitary operator $U\in B(\mathcal H)$ and a 
self-adjoint operator $Z\in \mathcal S^2(\mathcal H)$ such that
\begin{equation}\label{result}
f\bigl(e^{iZ} U\bigr) - f(U) -\,
\frac{d}{dt}\bigl(f(e^{itZ}U)\bigr)_{\vert t=0}\,\notin\, \mathcal S^1(\mathcal H).
\end{equation}
\end{theorem}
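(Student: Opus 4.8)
The plan is to assemble the finite-dimensional data produced in Section~\ref{sec_FD} into a single infinite-dimensional counter-example by taking a direct sum over $n$, after a suitable rescaling. First I would invoke Corollary~\ref{thm_mainTOIestimate}: for each $n\ge 3$ it furnishes a unitary $U_n\in B(\ell^2_{8n+4})$ and a self-adjoint $W_n\in B(\ell^2_{8n+4})$ with $\|W_n\|_2\le 1$ and
$$
\bignorm{T_{f^{[2]}}^{U_n,U_n,U_n}(W_nU_n,W_nU_n)}_1\ge {\rm const}\,\log(n)^{\frac12}.
$$
The idea is then to set $Z_n=t_n W_n$ for a sequence of scalars $t_n>0$ to be chosen, replace the operator $e^{iZ_n}U_n-U_n$ by its first two Taylor terms, and apply Theorem~\ref{th_KopliankoToTOI}. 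By \eqref{Taylor_MOI}, the second-order perturbation operator for the pair $(Z_n,U_n)$ equals
$$
T_{f^{[2]}}^{e^{iZ_n}U_n,U_n,U_n}(e^{iZ_n}U_n-U_n,iZ_nU_n)+T_{f^{[1]}}^{e^{iZ_n}U_n,U_n}(e^{iZ_n}U_n-U_n-iZ_nU_n).
$$
As $t_n\to 0$, $e^{iZ_n}U_n-U_n=iZ_nU_n+O(t_n^2)$ in every Schatten norm, $e^{iZ_n}U_n\to U_n$ uniformly, and the multiple operator integrals depend continuously on their generating unitaries (Lemmas~\ref{lem_contin} and~\ref{corres}, Remark~\ref{rem_contin}); hence dividing by $t_n^2$,
$$
\frac{1}{t_n^2}\Bigl(f(e^{iZ_n}U_n)-f(U_n)-\tfrac{d}{dt}\bigl(f(e^{itZ_n}U_n)\bigr)_{\vert t=0}\Bigr)\longrightarrow -\,T_{f^{[2]}}^{U_n,U_n,U_n}(W_nU_n,W_nU_n)
$$
in $\mathcal S^1_{8n+4}$ as $t_n\to 0$ (the $T_{f^{[1]}}$ term contributes $O(t_n^2)$ in $\mathcal S^1$ because $e^{iZ_n}U_n-U_n-iZ_nU_n=O(t_n^2)$ in $\mathcal S^2$ and the norm of $T_{f^{[1]}}^{e^{iZ_n}U_n,U_n}$ from $\mathcal S^2$ to $\mathcal S^2$ is bounded). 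Consequently, by choosing $t_n$ small enough I can arrange both $\|Z_n\|_2=t_n\|W_n\|_2\le \varepsilon_n$ for any prescribed $\varepsilon_n$, and
$$
\bignorm{f(e^{iZ_n}U_n)-f(U_n)-\tfrac{d}{dt}\bigl(f(e^{itZ_n}U_n)\bigr)_{\vert t=0}}_1\ge {\rm const}\,t_n^2\,\log(n)^{\frac12}.
$$

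Next I would normalise. Fix a sequence $\lambda_n>0$ with $\sum_n\lambda_n^2<\infty$ but $\sum_n\lambda_n^2\log(n)^{\frac12}=\infty$ (e.g. $\lambda_n^2=1/(n\log n)$), and for each $n$ use the freedom in the constant $t_n$ above to replace $Z_n$ by an operator, still called $Z_n$, with $\|Z_n\|_2=\lambda_n$ and
$$
\bignorm{f(e^{iZ_n}U_n)-f(U_n)-\tfrac{d}{dt}\bigl(f(e^{itZ_n}U_n)\bigr)_{\vert t=0}}_1\ge {\rm const}\,\lambda_n^2\,\log(n)^{\frac12};
$$
this is possible because the left-hand side scales like $t_n^2$ to leading order while $\|Z_n\|_2$ scales like $t_n$, so the ratio $(\text{trace-class lower bound})/\|Z_n\|_2^2$ stays bounded below by ${\rm const}\,\log(n)^{\frac12}$ for all small $t_n$. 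Then set $\mathcal H=\bigoplus_{n\ge 3}\ell^2_{8n+4}$, $U=\bigoplus_n U_n$, $Z=\bigoplus_n Z_n$. Since $\sum_n\|Z_n\|_2^2=\sum_n\lambda_n^2<\infty$, $Z$ is a well-defined self-adjoint operator in $\mathcal S^2(\mathcal H)$, and $U$ is unitary. Because $e^{iZ}U=\bigoplus_n e^{iZ_n}U_n$ and the derivative in \eqref{result} is computed blockwise (using the differentiability statement of \cite[(2.7)]{Peller2005} together with the fact that the blocks are invariant), the operator in \eqref{result} is the orthogonal direct sum of the blockwise second-order perturbation operators. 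Its $\mathcal S^1$-norm is therefore at least $\sum_n {\rm const}\,\lambda_n^2\log(n)^{\frac12}=\infty$, so it is not trace class, which is exactly \eqref{result}. Choosing $f$ as in \eqref{def_g_1} completes the construction, since that $f$ is $C^2(\Tdb)$.

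The main obstacle I anticipate is the passage to the infinite direct sum, specifically justifying that the second-order perturbation operator on $\mathcal H$ decomposes as the direct sum of the block operators and that no cross-terms or convergence issues spoil the lower bound. This requires checking that $t\mapsto f(e^{itZ}U)$ is differentiable into $\mathcal S^2(\mathcal H)$ with derivative $\bigoplus_n \tfrac{d}{dt}(f(e^{itZ_n}U_n))_{\vert t=0}$ — which follows from the block-diagonal structure together with the uniform control on the blocks and \cite[(2.7)]{Peller2005} — and that $f(e^{iZ}U)-f(U)\in\mathcal S^2(\mathcal H)$ with the analogous decomposition. A secondary technical point is making precise the ``$t_n$ small enough'' argument: one must verify that the $\mathcal S^1$-error terms coming from the $T_{f^{[1]}}$ summand and from the $O(t_n^2)$ discrepancy between $e^{iZ_n}U_n-U_n$ and $iZ_nU_n$ are genuinely $o(t_n^2\log(n)^{1/2})$ \emph{uniformly enough} in $n$ for each fixed $n$ (they need only be controlled $n$ by $n$, which is why this is routine rather than delicate), using that $\|T_{f^{[1]}}^{e^{iZ_n}U_n,U_n}\colon\mathcal S^2\to\mathcal S^2\|\le \|f'\|_\infty$ and $\|e^{iZ_n}U_n-U_n-iZ_nU_n\|_2\le \tfrac12\|Z_n\|_2\|Z_n\|_\infty\le \tfrac12 t_n^2\|W_n\|_2\|W_n\|_\infty$.
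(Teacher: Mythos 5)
Your overall strategy (rescale the $W_n$ from Corollary \ref{thm_mainTOIestimate}, expand via Theorem \ref{th_KopliankoToTOI}, pass to the limit using Lemma \ref{lem_contin}, then take a direct sum) is the paper's strategy, but your normalisation step contains a genuine gap. You claim that for each $n$ you may choose $t_n$ so that $\|Z_n\|_2=\lambda_n$ for a sequence $\lambda_n$ \emph{prescribed in advance} (e.g. $\lambda_n^2=1/(n\log n)$) while keeping the lower bound $\mathrm{const}\,\lambda_n^2\log(n)^{1/2}$, ``because the left-hand side scales like $t_n^2$.'' That scaling is only an asymptotic statement as $t_n\to 0$ for \emph{fixed} $n$: the inequality $\|R_n(t)\|_1\ge \mathrm{const}\,t^2\log(n)^{1/2}$ holds only for $t$ below some threshold $t_0(n)$ coming from the purely qualitative convergence in Lemma \ref{lem_contin} (no rate is available, and the dimension $8n+4$ and the operators change with $n$), so nothing guarantees $\lambda_n\le t_0(n)$. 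If instead you take $t_n\le\min(t_0(n),1/n)$, you get $\sum_n\|Z_n\|_2^2<\infty$ and $\beta_n\|Z_n\|_2^{-2}\ge \mathrm{const}\,\log(n)^{1/2}$, but $\beta_n$ itself may be summable, and then the direct sum is trace class and proves nothing. This is precisely the point where the paper needs an extra idea that your proposal omits: after establishing only the ratio statement (Lemma \ref{lem_sup}), it repeats the $n$-th block $N_n=[\alpha_n\|Z_n\|_2^{-2}]+1$ times, with $\sum_n\alpha_n<\infty$ and $\sum_n\alpha_n\beta_n\|Z_n\|_2^{-2}=\infty$, so that $\sum_n N_n\|Z_n\|_2^2<\infty$ while $\sum_n N_n\beta_n=\infty$. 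Without this amplification (or a quantitative, $n$-uniform threshold, which the available lemmas do not give), your construction does not close.

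A second, smaller but still substantive, flaw is your treatment of the $T_{f^{[1]}}$ remainder. You bound $e^{iZ_n}U_n-U_n-iZ_nU_n$ in $\mathcal S^2$ and use the $\mathcal S^2\to\mathcal S^2$ boundedness of $T_{f^{[1]}}^{e^{iZ_n}U_n,U_n}$, then assert the term is $O(t_n^2)$ in $\mathcal S^1$. Converting an $\mathcal S^2$ bound into an $\mathcal S^1$ bound costs a factor $\sqrt{8n+4}$, so your estimate only gives an error of order $\sqrt{n}\,t_n^2$, which dominates the main term of order $t_n^2\log(n)^{1/2}$; shrinking $t_n$ does not help since both terms scale as $t_n^2$. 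What is needed (and what the paper uses) is the $\mathcal S^1\to\mathcal S^1$ boundedness of $T_{f^{[1]}}^{e^{iZ_n}U_n,U_n}$ with a constant $K$ depending only on $f\in C^2(\Tdb)$ (Birman--Solomyak \cite{BiSo3}), combined with $\bigl\|e^{iZ_n}U_n-U_n-iZ_nU_n\bigr\|_1\le\frac12\|Z_n\|_2^2$, which makes the renormalised error a constant $K\|W_n\|_2^2=K$, negligible against $\log(n)^{1/2}$. This part is fixable by citing the correct result, but as written the estimate does not suffice; the remaining ingredients of your proposal (the identity \eqref{Taylor_MOI}, the blockwise computation of the derivative, and the direct-sum assembly) do match the paper's proof.
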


In the above statement, $\frac{d}{dt}\bigl(f(e^{itZ}U)\bigr)_{\vert t=0}$ denotes the derivative
of this function at $t=0$. We refer to \cite[(2.7)]{Peller2005} and the references therein for the facts that
for any $f\in C^1(\Tdb)$, for any unitary operator $U\in B(\mathcal H)$ and any
self-adjoint operator $Z\in \mathcal S^2(\mathcal H)$, the difference operator
$f\bigl(e^{iZ} U\bigr) - f(U)$ belongs to $\mathcal S^2(\mathcal H)$ and the function
$t\mapsto f(e^{itZ}U)$ is differentiable from $\Rdb$ into $\mathcal S^2(\mathcal H)$.
Therefore, the operator in \eqref{result} belongs to $S^2(\mathcal H)$.

Theorem \ref{main} will be proved with the function $f$ given by (\ref{def_g_1}).
We will combine a direct sum argument and the following lemma, whose proof relies 
on Corollary \ref{thm_mainTOIestimate}.

\begin{lemma}\label{lem_sup} 
For any $n\geq 1$, there exist a non zero
self-adjoint operator $Z_n\in B(\ell^{2}_{8n+4})$ 
and a unitary operator $U_n\in B(\ell^{2}_{8n+4})$, 
such that
\begin{equation}\label{lem1}
\sum_{n=1}^\infty \| Z_n\|_2^2\,<\infty,
\end{equation}
and
\begin{equation}\label{lem2}
\lim_{n\to\infty}\,
\frac{\ \Big\| f(e^{i Z_n} U_n)-f(U_n)-\frac{d}{dt}
\bigl(f(e^{it Z_n}U_n)\bigr)_{\vert t=0}\Big\|_1\,}
{\|Z_n\|_2^2}\,=\,\infty.
\end{equation}
\end{lemma}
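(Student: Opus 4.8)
The plan is to build $Z_n$ and $U_n$ directly from the finite-dimensional data produced by Corollary~\ref{thm_mainTOIestimate}. That corollary gives, for each $n\ge 3$, a self-adjoint $W_n\in B(\ell^2_{8n+4})$ with $\|W_n\|_2\le 1$ and a unitary $U_n\in B(\ell^2_{8n+4})$ with
$$
\bignorm{T_{f^{[2]}}^{U_n,U_n,U_n}(W_nU_n,W_nU_n)}_1\ge {\rm const}\,\log(n)^{\frac12}.
$$
The natural guess is to rescale: set $Z_n=\lambda_n W_n$ for a small scalar $\lambda_n>0$ to be chosen, keep the same $U_n$ (for $n=1,2$ just take $Z_n=0$, $U_n=I$, which is harmless). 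Then $\|Z_n\|_2\le\lambda_n$, so \eqref{lem1} will hold as soon as $\sum_n\lambda_n^2<\infty$; and by Theorem~\ref{th_KopliankoToTOI}, the numerator in \eqref{lem2} equals
$$
\bignorm{T_{f^{[2]}}^{e^{iZ_n}U_n,U_n,U_n}(e^{iZ_n}U_n-U_n,\,iZ_nU_n)
+T_{f^{[1]}}^{e^{iZ_n}U_n,U_n}(e^{iZ_n}U_n-U_n-iZ_nU_n)}_1 .
$$

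The second step is an asymptotic expansion of this expression in $\lambda_n$. Writing $e^{iZ_n}U_n-U_n=iZ_nU_n+O(\|Z_n\|^2)$ in the relevant norms, the leading term of the first summand should be $-\,T_{f^{[2]}}^{U_n,U_n,U_n}(Z_nU_n,Z_nU_n)=-\lambda_n^2\,T_{f^{[2]}}^{U_n,U_n,U_n}(W_nU_n,W_nU_n)$ (using the multilinearity of the triple operator integral, Lemma~\ref{lem_contin} to replace $e^{iZ_n}U_n$ by $U_n$ in the superscript up to an error, and $e^{iZ_n}U_n-U_n=iZ_nU_n+\tfrac{(iZ_n)^2}{2}U_n+\cdots$), while $e^{iZ_n}U_n-U_n-iZ_nU_n=O(\|Z_n\|^2)=O(\lambda_n^2)$ feeds into the second summand, which carries a bounded multiplier $T_{f^{[1]}}$ and so is also $O(\lambda_n^2)$. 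Dividing by $\|Z_n\|_2^2$, the main term has norm $\gtrsim \log(n)^{1/2}$ (here it is important that $\|Z_n\|_2=\lambda_n\|W_n\|_2$ with $\|W_n\|_2$ possibly $<1$, so one should divide by $\|Z_n\|_2^2$, not $\lambda_n^2$; the inequality only improves), while the error terms, after division, stay bounded \emph{uniformly in $n$} provided $\lambda_n\to0$. Choosing $\lambda_n$ so that $\lambda_n\to 0$, $\sum\lambda_n^2<\infty$, this yields \eqref{lem2}.

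The main obstacle is the \emph{uniformity} of the error estimates: the multiple operator integrals $T_{f^{[2]}}^{\cdot,\cdot,\cdot}$ and $T_{f^{[1]}}^{\cdot,\cdot}$ act on spaces of growing dimension $8n+4$, so I must control their $\mathcal S^2\times\mathcal S^2\to\mathcal S^1$ (resp.\ $\mathcal S^2\to\mathcal S^2$) norms by a constant independent of $n$. For the linear integral $T_{f^{[1]}}^{e^{iZ_n}U_n,U_n}$ on $\mathcal S^2$, this is automatic because $f\in C^1(\Tdb)$ and $f^{[1]}$ is bounded on $\Tdb^2$, so the Schur multiplier is $\mathcal S^2$-bounded with constant $\|f^{[1]}\|_\infty$. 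For the trilinear integral the relevant bound is $\|T_{f^{[2]}}^{\cdot,\cdot,\cdot}\colon\mathcal S^2\times\mathcal S^2\to\mathcal S^1\|\le\|B_{M}\colon\mathcal S^2\times\mathcal S^2\to\mathcal S^1\|$, which by Theorem~\ref{key} and Lemma~\ref{corres} equals a supremum of $\mathcal S^\infty\to\mathcal S^\infty$ Schur multiplier norms built from values of $f^{[2]}$; since $f\in C^2(\Tdb)$ one has (via the integral/Fourier representation of $f^{[2]}$, cf.\ the Besov-class estimates referenced in the introduction) a dimension-free bound of the form $\|T_{f^{[2]}}^{\cdot,\cdot,\cdot}\|\le C(f)$. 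Granting this, the Taylor remainders $e^{iZ_n}U_n-U_n-iZ_nU_n$ and the quadratic correction to $e^{iZ_n}U_n-U_n$ are $O(\|Z_n\|_2^2)$ in $\mathcal S^2$ and $\mathcal S^1$ respectively (using $\|X^2\|_1\le\|X\|_2^2$), and the triangle inequality separates the surviving main term $\lambda_n^2\bignorm{T_{f^{[2]}}^{U_n,U_n,U_n}(W_nU_n,W_nU_n)}_1\gtrsim\lambda_n^2\log(n)^{1/2}$ from errors of size $O(\lambda_n^3\log(n)^{1/2})$-type or $O(\lambda_n^2)$; after dividing by $\|Z_n\|_2^2\le\lambda_n^2$ and letting $n\to\infty$ with $\lambda_n\to0$, \eqref{lem2} follows. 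I would record the needed dimension-free bound on $T_{f^{[2]}}$ as a separate observation (or cite the relevant estimate from \cite{AdPS}/\cite{Peller2005} on $f^{[2]}$ for $f\in C^2(\Tdb)$) before assembling the estimate.
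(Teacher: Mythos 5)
Your reduction to Corollary \ref{thm_mainTOIestimate} and the idea of rescaling $W_n$ are the right starting point, but the way you resolve the ``uniformity'' obstacle is not just unjustified, it is contradicted by the results you are invoking. There is no dimension-free bound $\bignorm{T_{f^{[2]}}^{\,\cdot,\cdot,\cdot}\colon\mathcal S^2\times\mathcal S^2\to\mathcal S^1}\le C(f)$ for $f\in C^2(\mathbb T)$: Corollary \ref{thm_mainTOIestimate} itself gives $\bignorm{T_{f^{[2]}}^{U_n,U_n,U_n}(W_nU_n,W_nU_n)}_1\ge {\rm const}\,\log(n)^{1/2}$ with $\norm{W_nU_n}_2=\norm{W_n}_2\le 1$, so $\bignorm{T_{f^{[2]}}^{U_n,U_n,U_n}\colon\mathcal S^2\times\mathcal S^2\to\mathcal S^1}\to\infty$ as $n\to\infty$. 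Such a bound is exactly Peller's sufficient condition (it holds for $f\in B^2_{\infty 1}$, not for general $C^2$ functions); if it were true for this $f$, formula \eqref{Taylor_MOI} would force the second-order remainder into $\mathcal S^1$ and Theorem \ref{main} would be false — the entire construction rests on the failure of that bound. A second, smaller gap: for the summand $T_{f^{[1]}}^{e^{iZ_n}U_n,U_n}(e^{iZ_n}U_n-U_n-iZ_nU_n)$ you must control its $\mathcal S^1$ norm (it is subtracted from an $\mathcal S^1$ lower bound on the main term), and the automatic $\mathcal S^2\to\mathcal S^2$ bound $\norm{f^{[1]}}_\infty$ only gives an $\mathcal S^2$ estimate; converting it to $\mathcal S^1$ costs $\sqrt{8n+4}$. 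What is actually needed is the dimension-free $\mathcal S^1\to\mathcal S^1$ boundedness of $T_{f^{[1]}}$, which holds because $f\in C^2(\mathbb T)$, by Birman--Solomyak \cite{BiSo3}.

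The argument can be repaired precisely by giving up uniformity in $n$ for the trilinear error terms, which is what the paper does: fix $n$, set $W_{m,n}=\frac1m W_n$ (after normalizing $\norm{W_n}_2=1$), and let $m\to\infty$. By \eqref{Taylor_MOI}, $m^2R_{m,n}$ splits into a trilinear and a linear part; since $m(e^{iW_{m,n}}-I)\to iW_n$, the purely qualitative finite-dimensional continuity of Lemma \ref{lem_contin} shows the trilinear part converges to $T_{f^{[2]}}^{U_n,U_n,U_n}(iW_nU_n,iW_nU_n)$, whose $\mathcal S^1$ norm is at least ${\rm const}\,\log(n)^{1/2}$, while the linear part is bounded by $K\norm{W_n}_2^2=K$ uniformly in $m$ (Birman--Solomyak plus $m^2(e^{iW_{m,n}}-I-iW_{m,n})\to -\tfrac12 W_n^2$). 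Hence for some $m=m(n)\ge n$ one gets $m^2\norm{R_{m,n}}_1\ge{\rm const}\,\log(n)^{1/2}$, and $Z_n:=W_{m(n),n}$ satisfies \eqref{lem1} (as $\norm{Z_n}_2\le 1/n$) and \eqref{lem2} (as $\norm{Z_n}_2=1/m$). In other words, the scaling must be chosen per $n$, after the limit, rather than prescribed in advance; no uniform bound on $T_{f^{[2]}}$ is ever used, because none exists.
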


\begin{proof}
We fix $n\geq 3$ and we take $W_n$ and $U_n$ given by 
Corollary \ref{thm_mainTOIestimate}.
Note that changing $W_n$ into $\norm{W_n}_2^{-1}W_n$, we may (and do) 
assume that $\norm{W_n}_2=1$.
We consider the sequence
$$
W_{m,n}=\frac{1}m W_n, \quad m\ge 1,
$$
and we set
$$
R_{m,n}:=
f(e^{iW_{m,n}}U_n)-f(U_n)-\frac{d}{dt}\bigl(f(e^{it W_{m,n}}U_n)\bigr)_{\vert t=0}.
$$
By Theorem \ref{th_KopliankoToTOI} we have
\begin{multline}\label{e3} m^2 R_{m,n}=T_{f^{[2]}}^{e^{iW_{m,n}}
U_n,U_n,U_n}\bigl(m(e^{iW_{m,n}}U_n-U_n),i W_{n}U_n\bigr)
\\ + T_{f^{[1]}}^{e^{iW_{m,n}} U_n, U_n}
\bigl(m^2(e^{iW_{m,n}}U_n-U_n-iW_{m,n}U_n)\bigr).
\end{multline}
Note that 
$$
m\bigl(e^{iW_{m,n}}-I_n\bigr)\longrightarrow i W_{n} \quad\text{as} \ m\to\infty.
$$
Hence by Lemma \ref{lem_contin}, we have
$$
T_{f^{[2]}}^{e^{iW_{m,n}}U_n,U_n,U_n}\bigl(m(e^{iW_{m,n}}U_n-U_n),i W_{n}U_n\bigr)
\longrightarrow
T_{f^{[2]}}^{U_n,U_n,U_n}(iW_{n}U_n,i W_{n} U_n)
$$
as $m\to \infty.$
This result and Corollary \ref{thm_mainTOIestimate} imply that for $m$ large enough, we have
\begin{equation}\label{eq1}
\Big\|T_{f^{[2]}}^{e^{iW_{m,n}}U_n,U_n,U_n}\bigl(m(e^{iW_{m,n}}U_n-U_n),iW_{n}U_n\bigr)
\Big\|_1\ge {\rm const}\, \log(n)^{\frac12}.
\end{equation}

We now turn to the analysis of the second term in the right hand side of (\ref{e3}).
Since $f\in C^2(\mathbb T)$, there exists a constant $K>0$ 
(only depending on $f$ and not on either $n$ or the 
operators $U_n$ and $W_{m,n}$) such that
$$
\bignorm{T_{f^{[1]}}^{e^{iW_{m,n}}U_n,U_n}\colon \mathcal 
S^1_{8n+4}\to \mathcal S^1_{8n+4}}\,\leq K.
$$
This follows from \cite{BiSo3} (see also \cite{Peller1985}).

Now observe that
$$
m^2\bigl(e^{iW_{m,n}}-I_n-iW_{m,n}\bigr)
%=m^2\sum_{k=2}^\infty \frac{Z_{m,n}^k}{k!}
%=\sum_{k=2}^\infty \frac{Z_{n}^k}{m^{k-2}k!}
\longrightarrow \frac{W_n^2}{2}  \quad\text{as} \ m\to\infty.
$$
Hence we have
\begin{equation}\label{eq2}
\Bignorm{T_{f^{[1]}}^{e^{iW_{m,n}}U_n,U_n}\bigl(m^2(e^{iW_{m,n}}
U_n -U_n -iW_{m,n}U_n)\bigr)}_1 \leq K \norm{W_n^2}_1
= K\norm{W_n}_2^2
\end{equation}
for $m$ large enough.

Combining (\ref{eq1}) and (\ref{eq2}), we deduce from the identity (\ref{e3})
the existence of an integer $m\geq 1$ 
for which we have an estimate 
\begin{equation}\label{R}
m^2\norm{R_{m,n}}_1\geq{\rm const}\,\log(n)^{\frac12}.
\end{equation}
We may assume that $m\geq n$, which ensures that 
$$
\norm{W_{m,n}}_2\leq\frac1n.
$$
Then we set $Z_n = W_{m,n}$. The preceding inequality
implies that $\sum_n\norm{Z_n}_2^2<\infty$.
Since $\norm{W_n}_2=1$, we have $\norm{Z_{n}}_2=\frac1m$ hence
the estimate (\ref{R}) yields (\ref{lem2}). 
\end{proof}

\begin{proof}[Proof of Theorem \ref{main}]
We apply Lemma \ref{lem_sup} above. We set
$$
\beta_n:=\bigl\|
f(e^{i Z_n}U_n)-f(U_n)-\frac{d}{dt}
\bigl(f(e^{it  Z_n}U_n)\bigr)_{\vert t=0}\bigr\|_1
$$
for any $n\geq 1$. Since $\bigl\{\beta_n\norm{Z_n}^{-2}_2\bigr\}_{n=1}^{\infty}$ 
is an unbounded sequence, by (\ref{lem2}), there exists
a positive sequence $(\alpha_n)_{n\geq 1}$ such that 
\begin{equation}\label{alpha}
\sum_{n=1}^\infty 
\alpha_n<\infty\qquad\hbox{and}\qquad
\sum_{n=1}^\infty 
\alpha_n\beta_n\norm{Z_n}^{-2}_2 = \infty.
\end{equation}
Set 
$$
N_n = \bigl[\alpha_n \norm{Z_n}^{-2}_2\bigr] +1, 
$$
where
$[\,\cdotp]$ denotes the integer part of a real number.
We have both
$$
N_n\norm{Z_n}^2_2\leq \alpha_n+\norm{Z_n}^2_2
\qquad\hbox{and}\qquad
N_n\geq \alpha_n\norm{Z_n}^{-2}_2.
$$
Hence 
it follows from (\ref{alpha}), (\ref{lem1}) and (\ref{lem2}) that
$$
\sum_{n=1}^\infty N_n\norm{Z_n}^2_2\,<\infty
\qquad\hbox{and}\qquad
\sum_{n=1}^\infty N_n\beta_n\,=\infty.
$$

We let $\mathcal H_n = \ell^2_{N_n}(\ell^2_{8n+4})$ and we let 
$\tilde Z_n$ (resp. $\tilde U_n$) be the element of
$B(\mathcal H_n)$ obtained as the direct sum of $N_n$ copies
of $Z_n$ (resp. $U_n$). Then $\tilde Z_n$ is a self-adjoint operator
and $\norm{\tilde Z_n}^2_2 = N_n\norm{Z_n}^2_2$. Consequently,
\begin{equation}\label{S2}
\sum_{n=1}^\infty \norm{\tilde Z_n}^2_2\,<\infty\,.
\end{equation}
Likewise $\tilde U_n$ is a unitary operator and we have
\begin{align*}
\bigl\|
f(e^{i \tilde Z_n}\tilde U_n)& -f(\tilde U_n)-\frac{d}{dt}
\bigl(f(e^{it  \tilde Z_n}\tilde U_n)\bigr)_{\vert t=0}\bigr\|_1\\
& = N_n \bigl\|
f(e^{i Z_n}U_n)-f(U_n)-\frac{d}{dt}
\bigl(f(e^{it  Z_n}U_n)\bigr)_{\vert t=0}\bigr\|_1 = N_n\beta_n.
\end{align*}
Hence
$$
\sum_{n=1}^\infty\,
\bigl\|
f(e^{i \tilde Z_n}\tilde U_n) -f(\tilde U_n)-\frac{d}{dt}
\bigl(f(e^{it  \tilde Z_n}\tilde U_n)\bigr)_{\vert t=0}\bigr\|_1\,=\infty.
$$

We finally consider the direct sum
$$
\mathcal H = \overset{2}{\oplus}_{n\geq 1} \mathcal H_n.
$$
We let $Z$ be the direct sum of the $\tilde Z_n$, defined by
$Z(\xi) = \{\tilde Z_n(\xi_n)\}_{n=1}^\infty$ for any $\xi= \{\xi_n\}_{n=1}^\infty$
in $\mathcal H$. Property (\ref{S2}) ensures that $Z$ is well-defined and belongs to
$\mathcal S^2(\mathcal H)$, with $\norm{Z}^2_2 = \sum_{n=1}^\infty \norm{\tilde Z_n}^2_2$.
Likewise we let $U$ be the direct sum of the $\tilde U_n$. This is a unitary operator
and $\frac{d}{dt}\bigl(f(e^{it  Z}U)\bigr)_{\vert t=0}$ is the direct sum of the
$\frac{d}{dt} \bigl(f(e^{it  \tilde Z_n}\tilde U_n)\bigr)_{\vert t=0}$. Therefore
\begin{align*}
\bigl\|
f(e^{i Z} U) & -f(U)-\frac{d}{dt}
\bigl(f(e^{it  Z}U)\bigr)_{\vert t=0}\bigr\|_1\\&  =
\sum_{n=1}^\infty\, 
\bigl\|
f(e^{i \tilde Z_n}\tilde U_n) -f(\tilde U_n)-\frac{d}{dt}
\bigl(f(e^{it  \tilde Z_n}\tilde U_n)\bigr)_{\vert t=0}\bigr\|_1\,.
\end{align*}
Since this sum is infinite, we obtain the assertion (\ref{result}).
\end{proof}

\bigskip\noindent
\textbf{Acknowledgement}. The authors C.C and C.L have been 
supported by the research program ANR 2011 BS01 008 01
and by the ``Conseil r\'egional de Franche-Comt\'e".
The authors D.P., F.S and A.T. have been supported by the 
Australian Research Council grant DP150100920.

\bigskip


\begin{thebibliography}{10}

\bibitem{AdPS} W. van Ackooij, B. de Pagter, F. A. Sukochev,  
\textit{Domains of infinitesimal generators of automorphism flows}, 
J. Funct. Anal. \textbf{218} (2005), no. 2, 409–424.

\bibitem{ACS} P. J. Ayre, M. G. Cowling, F. A. Sukochev, \textit{ Operator Lipschitz estimates in the unitary setting,} Proc. Amer. Math. Soc. (to appear).

%\bibitem{ACDS} N.~A.~Azamov, A.~L.~Carey, P.~G.~Dodds, F.~A.~Sukochev,
%{\it Operator integrals, spectral shift, and spectral flow,}
%Canad. J. Math. {\bf 61} (2009), No. 2, 241 -- 263.


%
%\bibitem{A-1982} J.~Arazy, \emph{ Certain Schur-Hadamard multipliers in the space
%$C_p$}, Proc. Amer. Math. Soc. \textbf{86} (1982), no. 1, 59--64.

%\bibitem{Bennett1977}
%G. Bennett,  \emph{ Schur multipliers}, Duke Math. J. \textbf{44} (1977), no. 3, 603--639.
%

%\bibitem{BiSo1} M. S. Birman and M. Z. Solomyak, Double Stieltjes operator
%integrals  (Russian), Probl. Math. Phys.,  Izdat. Leningrad.
%Univ., Leningrad, (1966)  33-–67. English translation in: Topics
%in Mathematical Physics, Vol. 1 (1967),  Con- sultants Bureau
%Plenum Publishing Corporation, New York, 25–-54.



%\bibitem{BiSo2} M. S. Birman and M. Z. Solomyak, Double Stieltjes operator
%integrals II (Russian),  Problems of Mathematical Physics, Izdat.
%Leningrad. Univ., Leningrad, no. 2 (1967),  26–-60. English
%translation in: Topics in Mathematical Physics, Con- sultants
%Bureau, New York, Vol. 2, (1968) 19–-46.

\bibitem{BiSo3} M. S. Birman and M. Z. Solomyak, Double Stieltjes operator
integrals III (in Russian), Probl. Math. Phys., Leningrad Univ., 6
(1973), 27--53.

\bibitem{BY} M. Sh. Birman, D. R. Yafaev,  \textit{The spectral shift function.
The papers of M. G. Kre{\u\i}n and their further development} (Russian). Algebra i
Analiz \textbf{4} (1992), no. 5, 1--44. Translation in St. Petersburg Math.
J. \textbf{4} (1993), no. 5, 833--870.

\bibitem{CPSZ1} M. Caspers, D. Potapov, F. Sukochev, D. Zanin, {\it Weak type estimates for the absolute value mapping.} J. Operator Theory, {\bf 73} (2015), no. 2, 101--124.

\bibitem{CPSZ2} M. Caspers, D. Potapov, F. Sukochev, D. Zanin, {\it Weak type commutator and Lipschitz estimates: resolution of the Nazarov-Peller conjecture,} arXiv:1506.00778.

%\bibitem{CMPS} M. Caspers, S. Montgomery-Smith, D. Potapov, F. Sukochev,  
%{\it The best constants for operator Lipschitz functions on
%Schatten classes}. to appear in J. Funct. Anal.

\bibitem{CLPST1} C. Coine, C. Le Merdy, D. Potapov, F. Sukochev, A. Tomskova, 
{\it Resolution of Peller's problem concerning Koplienko-Neidhardt trace formulae,}  
arXiv:1504.03843.


\bibitem{Farforovskaya1972} 
Yu. B. Farforovskaya, \emph{An example of a {L}ipschitz function of self-adjoint operators
with non-nuclear difference under a nuclear perturbation.}, Zap. Nauchn. Sem.
Leningrad. Otdel. Mat. Inst. Steklov. (LOMI) \textbf{30} (1972), 146--153.

\bibitem{Krein1} M. G. Kre{\u\i}n, {\it On the trace formula in perturbation theory} 
(Russian), Mat. Sbornik N.S. {\bf 33} (75), (1953), 597--626.

\bibitem{Krein} M. G. Kre{\u\i}n, {\it On the perturbation determinant and the trace 
formula for unitary and self-adjoint operators} (Russian), Dokl. Akad. Nauk SSSR {\bf 144} (1962), 268--271. 
Translation: Soviet Math. Dokl. {\bf 3} (1962), 707--710.

\bibitem{KreinPerturbation1964}
M.~G. Kre{\u\i}n, \emph{Some new studies in the theory of perturbations of
self-adjoint operators}, First {M}ath. {S}ummer {S}chool, {P}art {I}
({R}ussian), Izdat. ``Naukova Dumka'', Kiev, 1964, pp.~103--187.


%{\color{blue}\bibitem{Lif} I. M. Lifshitz, \textit{On a problem of the theory of 
%perturbations connected with quantum statistics}, Uspekhi Mat. Nauk \textbf{7} (1 (47)), (1952), 171--180.}


%\bibitem{Davies} E. B. Davies, {\it Lipschitz continuity of functions 
%of operators in the Schatten classes.} J.
%Lond. Math. Soc., 37 (1988), 148–157.

%\bibitem{DJP} J. Diestel, H. Jarchow, A. Pietsch, {\it Addenda and corrigenda to: 
%"Operator ideals''}, Handbook of the geometry of Banach spaces, Vol. I, 437--496, North-Holland, Amsterdam, 2001.

%\bibitem{Dunford54}
%N. Dunford, {\it Spectral operators,} Pacific J. Math. {\bf 4}
%(1954), 321--354.

%\bibitem{Dunford-Schwartz1971}
%N. Dunford, J. T. Schwartz, {\it Linear operators. {P}art {III}:
%{S}pectral operators,} Interscience Publishers [John Wiley \&
%Sons, Inc.], New York-London-Sydney, 1971.

%\bibitem{ER} E. G. Effros, Zh.-J. Ruan,
%{\it Multivariable multipliers for groups and their operator algebras}. 
%Operator theory: operator algebras and applications, Part 1 (Durham, NH, 1988), 197--218,
%Proc. Sympos. Pure Math., 51, Part 1, Amer. Math. Soc., Providence, RI, 1990.

%\bibitem{Hiai}  F. Hiai, D. Petz, {\it
%Introduction to matrix analysis and applications},
%Universitext, Springer, New Delhi, 2014. viii+332 pp.



%\bibitem{Kato} T. Kato, {\it Continuity of the map $S\mapsto |S|$ for 
%linear operators.} Proc. Japan Acad., 49 (1973),
%157–160.

%\bibitem{L1934} K. L\"{o}wner, \emph{\"{U}ber monotone Matrixfunktionen}, (German) Math. Z.
%\textbf{38} (1934), no. 1, 177-216.


%\bibitem{dPS}
%B.~de~Pagter and F.~A. Sukochev, \emph{Differentiation of operator
%functions in non-commutative {$L\sb p$}-spaces}, J. Funct. Anal. \textbf{212} (2004), no.~1, 28--75.

%\bibitem{Pav} B. S. Pavlov,  \textit{Multidimensional operator integrals},
%(Russian) Problems of Math. Anal., No. 2: Linear Operators and Operator Equations (Russian),
%(1969) 99-122.


\bibitem{Peller1985} V. V. Peller, {\it Hankel operators in the theory of perturbations of unitary 
and selfadjoint operators}, Funktsional. Anal. i Prilozhen. 19 (1985), no. 2, 37--51 (in Russian),
English translation: Funct. Anal. Appl. 19 (1985), 111--123.


\bibitem{Peller1990} V. V. Peller, {\it  Hankel operators in the perturbation theory of unbounded 
selfadjoint operators}. Analysis and partial differential equations, 529--544, Lecture Notes in 
Pure and Appl. Math., 122, Dekker, New York, 1990. 


\bibitem{Peller2005} V. V. Peller, {\it An extension of the Koplienko-Neidhardt trace formulae.} 
J. Funct. Anal. {\bf 221} (2005), no. 2, 456--481.

%\bibitem{Peller2006}
%V.~V. Peller, \emph{Multiple operator integrals and higher
%operator derivatives}, J. Funct. Anal. \textbf{233} (2006), no.~2,
%515--544.


%\bibitem{Peller2010}
%V.~V. Peller, {\it The behavior of functions of operators under perturbations}. In 
%``A glimpse at Hilbert space operators'', pp. 287--324, Oper. Theory Adv. Appl. 207, 
%Birkh\"auser Verlag, Basel, 2010.


%\bibitem{Pietsch}
%A. Pietsch. {\it Operator ideals}, volume 20 of North-Holland
%Mathematical Library. North- Holland Publishing Co., Amsterdam-New
%York, 1980. Translated from German by the author.

%\bibitem{PisierBook} G. Pisier, {\it Similarity problems and completely bounded maps}, 
%Lecture Notes in Mathematics, 1618. Springer-Verlag, Berlin, 1996.

%\bibitem{PSS-SSF}
%D.~Potapov, A.~Skripka, and F.~Sukochev, \emph{Spectral shift
%function of higher order},  Invent. Math. \textbf{193} (2013), no.
%3, 501--538.

%\bibitem{PSS5} D. Potapov, A. Skripka, F. Sukochev, 
%\textit{{\color{red}Functions of unitary operators: derivatives and trace formulas}, submitted manuscript.}

%\bibitem{PS-S_p}
%D.~Potapov and F.~Sukochev, \emph{Fr\'echet differentiability
%of~$S^p$ norms},  Adv. Math. \textbf{262} (2014) 436--475.


\bibitem{PS-Lipschitz}
D.~Potapov and F.~Sukochev, \emph{Operator-{L}ipschitz functions
in {S}chatten-von {N}eumann classes},  Acta Math. \textbf{207}
(2011), no. 2, 375--389.

%\bibitem{Ryan} R. Ryan, \emph{Introduction to Tensor Products of Banach Spaces},
%Springer, London, 2002.

%\bibitem{St}  V. V. Sten'kin,  \textit{Multiple operator integrals},
%(Russian) Izv. Vys\v{s}. U\v{c}ebn. Zaved. Matematika \textbf{179}
%(1977) no. 4, 102-115.

\end{thebibliography}
\end{document}